\newcommand{\bone}{{\bf 1}}
\newcommand{\evec}{{\bf e}}
\newcommand{\xvec}{{\bf x}}
\newcommand{\svec}{{\bf s}}
\newcommand{\tvec}{{\bf t}}
\title{Mittag--Leffler functions and their applications in network science\thanks{Submitted to the editors DATE.
\funding{The work of F.A. was supported by fellowship ECF-2018-453 from the Leverhulme Trust. The work of F.D. was supported by the INdAM GNCS 2020 Project ``\emph{Nonlocal models for the analysis of complex networks}''.
}}}
\author{Francesca Arrigo\thanks{Department of Mathematics and Statistics, University of Strathclyde, Glasgow, UK (\email{francesca.arrigo@strath.ac.uk},).}
\and Fabio Durastante\thanks{Dipartimento di Matematica, Universit\`a di Pisa, Pisa, IT (\email{fabio.durastante@di.unipi.it}), Istituto per le Applicazioni del Calcolo ``M. Picone'', Consiglio Nazionale delle Ricerche, Napoli, IT (\email{f.durastante@na.iac.cnr.it}).}}
\begin{document}

\maketitle

\begin{abstract}
We describe a complete theory for walk-based centrality indices in complex networks defined in terms of Mittag--Leffler functions. This overarching theory includes as special cases well-known centrality measures like subgraph centrality and Katz centrality. 
The indices we introduce are parametrized by two numbers; by letting these vary, we show that Mittag--Leffler centralities interpolate between degree and eigenvector centrality, as well as between resolvent-based and exponential-based indices.
We further discuss modelling and computational issues, and provide guidelines on parameter selection. The theory is then extended to the case of networks that evolve over time. Numerical experiments on synthetic and real-world networks are provided.  
\end{abstract}

\begin{keywords}
Complex network, Mittag--Leffler function, matrix function, centrality measure, temporal network
\end{keywords}

\begin{AMS}
91D30, 15A16, 05C50
\end{AMS}

	\section{Introduction}
Networks (or graphs) have become an increasingly popular modelling tool in a range of applications, often where the question of interest to practitioners is to identify the most important entities (which can be nodes, edges, sets of nodes, etc.) within the system under study; see, e.g.,~\cite{Ashtiani2018,Holme2003,newman2018networks,Vargas2018,VOLTESDORTA2017119}.  This question is commonly answered by means of centrality measures; These are functions that assign nonnegative scores to the entities, with the understanding that the higher the score, the more important the entity.   
Several centrality measures have been introduced over the years~\cite{boldi2014axioms,bonacich1987power,MR2171832,katz1953new}. 
Here we consider walk-based centrality indices~\cite{MR2736969}, where a walk around a graph is a sequence of nodes that can be visited in succession following the edges in the graph. These measures can be defined using (sums of) entries of matrix functions described in terms of the adjacency matrix $A$ of the graph and assign scores to nodes based on how well they spread information to the other nodes in the network. Possibly the most widely known measures of centrality in this family are Katz centrality~\cite{katz1953new}, defined for node $i$ as the $i$th entry of $(I-\gamma A)^{-1}\bone$, for $0<\gamma\rho(A)<1$ and $\bone$ the vector of all ones, and subgraph centrality~\cite{MR2171832}, defined for node $i$ as $(e^{\gamma A})_{ii}$, for $\gamma>0$. 
The popularity of these measures stems from their interpretability in terms of walks around the graph, but it also follows from the fact that they are easily computed or approximated; see, e.g.,~\cite{MR3143833,HigFM}. 
Another interesting feature of these measures was shown in \cite{MR3354998}, where the authors proved that a special class of functions, which includes the exponential and the resolvent, induces centrality indices  that interpolate between degree centrality, defined as the number of connections that a node has, and eigenvector centrality, defined using the entries of the Perron eigenvector of $A$. 

In the following we show that Mittag--Leffler (ML) functions~\cite{mittag1903nouvelle}, which fall in the aforementioned class of functions, induce well-defined centrality measures that moreover interpolate between resolvent-based and exponential-based indices, thus closing the gap between the two induced centralities.  
Several instances of ML centrality indices are scattered throughout the network science literature, but often they are not being identified as such. One of the contributions of this work is to provide an exhaustive (to the best of our knowledge) review of such appearances. 
Furthermore, 
this work  provides a thorough  analysis of the properties of parametric ML centrality indices and a characterization of the possible choices of parameters that ensure both interpretability and computability of such measures. 
The results are then extended to the case of temporal network, following the contents of~\cite{MR3177230}.

\smallskip

Our contribution is thus threefold. We provide an extensive review of  previous appearances of Mittag--Leffler centrality indices in network science; We develop a general theory for such measures and further show that they ``close the gap" between resolvent-based centrality measures and exponential-based centrality measure, and we provide guidelines for parameter selection; Finally, we describe extensions of such centrality measures to networks that evolve over time. 

The paper is organized as follows. 
In \cref{sec:background} we review some basic definitions and tools from graph theory that will be used throughout. We also review the definition of ML functions and provide some examples of functions in this family. 
In \cref{sec:MLindices} we review previous appearances of ML centrality and communicability indices, discuss interpretability issues, and introduce the new centrality indices. 
We further perform numerical tests on some real-world networks. \Cref{sec:temporal} describes how ML centrality indices can be adapted to the case of time-evolving networks, extending results from~\cite{MR3177230} to a more general framework. Numerical results on synthetic and real-world networks are also discussed.
We conclude with some remarks and a brief description of future work in \cref{sec:conclusions}

\section{Background}\label{sec:background}
	
	This section is devoted to a brief introduction of the main concepts that will be used throughout the paper. In particular, we review basic concepts from graph theory and network science; we also recall the definition of Mittag--Leffler functions and a few of their properties.  
	
	\subsection{Graphs}
	
	A {\it graph} or {\it network} $G = (V,E)$ is defined as a pair of sets:  a set $V = \{1,2,\ldots,n\}$ of {\it nodes} or {\it vertices} and  a set $E\subset V\times V$ of {\it edges} or {\it links} between them~\cite{bapat2010graphs}. 
	If the set $E$ is symmetric, namely if for all $(i,j)\in E$ then $(j,i)\in E$, the graph is said to be \textit{undirected}; {\it directed} otherwise. 
	An edge from a node to itself is called a {\it loop}. 
	
	A popular way of representing a network is via its {\it adjacency matrix} $A = (a_{ij})\in\mathbb{R}^{n\times n}$, entrywise defined as
	\[
	a_{ij} = 
	\begin{cases}
	w_{ij} & \text{if }(i,j)\in E \\
	0 & \text{otherwise}
	\end{cases}
	\]
    where $w_{ij}>0$ is the weight of edge $(i,j)$. 
    In this paper we will restrict our attention to unweighted \textit{simple} graphs, i.e., graphs that are undirected and do not contain loops or repeated edges between nodes, and for which the weights of the edges are all uniform; consequently, the adjacency matrices used throughout this paper will be binary, symmetric, and with zeros on the main diagonal. 
    We note however that all the results in this paper can be generalized beyond this simple case.

	\subsection{Centrality measures}
	One of the most addressed questions in network science concerns the identification of the most important entities within the graph; What is the most vulnerable airport to a terror attack~\cite{VOLTESDORTA2017119}? Which is the road more likely to be busy during rush hour~\cite{Holme2003}? Who is the most influential pupil in the school~\cite{Vargas2018}? What proteins are vital to a cell~\cite{Ashtiani2018}? 
	Several strategies to answer these questions have been presented over the years, and these all rely on the idea that an entity is more important within the graph if it is better connected than the others to the rest of the network. In order to quantify this idea of importance, entities are assigned a nonnegative score, or {\it centrality}~\cite{boldi2014axioms}: the higher its value, the more important the entity is within the graph. 
    We will focus here on centrality measures for nodes, although we note that several centrality measures for edges have been defined over the years~\cite{MR3479697,MR3914214} and that everything discussed here for nodes easily translates to address the case of edges by working on the line graph~\cite{bapat2010graphs}.
The simplest measure of centrality for nodes is {\it degree centrality}. According to this measure, a node $i$ is more important the larger the number of its connections $d_i= \sum_{j=1}^n a_{ij} = (A\bone)_i$, where $\bone$ is the vector of all ones. 
    This measure is very local, in the sense that it is oblivious to the whole topology of the network and thus may misrepresent the role of nodes: a node acting as the only bridge between two tightly connected sets of nodes has low degree, but it has extremely high importance as its failure would cause the network to break into two separate components. 
A way around this issue is to consider both the number of neighbors and their importance when assigning scores to nodes; see, e.g., \cite{vigna2016spectral} and references therein.  
    The centrality measure formalizing this idea is known as {\it eigenvector centrality}~\cite{bonacich1972factoring,bonacich1987power}; it is  entrywise defined as:
    \[
    x_i = \frac{1}{\rho(A)}\sum_{j=1}^n a_{ij} x_j
    \]
    where $\rho(A)>0$ is the spectral radius of the irreducible adjacency matrix $A\geq 0$. Existence, uniqueness and nonnegativity of the vector $\xvec = (x_i)$ are guaranteed by the Perron-Frobenius theorem; see, e.g.,~\cite{horn2012matrix}.

    Degree and eigenvector centrality represent the two limiting behaviors of a wider class of parametric centrality measures that can be defined in terms of matrix functions~\cite{MR2736969}.\footnote{This result was shown in a paper by Benzi and Klymko~\cite{MR3354998} and later extended to the non-backtracking framework in~\cite{arrigo2020beyond}.} Consider the analytic function $f$ defined via the following power series:  
	\[f(z) = \sum_{r=0}^\infty c_r z^r\] 
	{with} $c_r\geq 0$ and $|z|<R_f${, where $R_f$ the radius of convergence of the series, which can be either finite or infinite}; then under suitable hypothesis on the spectrum of $A$~\cite[Theorem 4.7]{HigFM}, we can write: 
	\[f(A) = \sum_{r=0}^\infty c_r A^r.\]
Recall that a {\it walk} of length $r$ is a sequence of $r+1$ nodes $i_1, i_2,\ldots, i_{r+1}$ such that $(i_\ell,i_{\ell+1})\in E$ for all $\ell = 1,\ldots,r$; moreover, it is easy to show the number of such walks is $(A^r)_{i_1,i_{r+1}}$~\cite{bapat2010graphs}.  Therefore, entrywise, this matrix function has a clear interpretation in terms of walks taking place across the graph: $(f(A))_{ij}$ is a weighted sum of the number of all walks of any length that start from node $i$ and end at node $j$. Since the weights are such that $c_r\to 0$ as $r$ increases, we are also tacitly assuming that walks of longer lengths are considered to be less important. 
	In~\cite{MR2171832} the authors defined the {\it subgraph centrality} of a node $i\in V$ as 
	\begin{equation*}\label{eq:SC_f}
	s_i(f) = \evec_i^T f(A)\evec_i = \sum_{r=0}^\infty c_r (A^r)_{ii}.
	\end{equation*}
	This measure accounts for the returnability of information from a node to itself and it is a weighted count of all the subgraphs node $i$ is involved in; see, e.g., \cite{EstradaBook}. 
	We will write $\svec(f) = (s_i(f))$ to denote the vector of subgraph centralities induced by the function $f$.

    The most popular functions used in networks science are $f(z) = e^z$ \cite{MR2171832} and $f(z) = (1+z)^{-1}$~\cite{katz1953new}; however nothing in principle forbids the use of other analytic functions~\cite{MR3479697,MR3948252,MR3005305}. 
    
    Subgraph centrality is computationally quite expensive to derive for all nodes, since one has to compute all the diagonal entries of $f(A)$ and this is usually unfeasible for large networks. However, if only a few top ranked nodes need to be identified, approximation techniques are available; see, e.g.,\cite{MR3143833}.
    
    In~\cite{benzi2013total} the authors introduced the concept of {\it total (node) communicability}. Here, the importance of a node depends on how well it communicates with the whole network, itself included:     
    \[
        \tvec(f) = f(A)\bone.
    \]
   Entrywise it is thus defined as
   \begin{equation*}\label{eq:TC_f}
        t_i(f) = \sum_{j=1}^n (f(A))_{ij} = \sum_{j=1}^n\sum_{r=0}^\infty c_r(A^r)_{ij}
    \end{equation*}
    
    Computationally speaking, this measure can be computed more efficiently than subgraph centrality, and can also be easily updated after the application of low-rank modification of the adjacency matrix $A$, i.e., after the removal or the addition of few edges~\cite{MR3780747,MR3863072}.

    \begin{remark}
    All the above definition have been given in the setting of unweighted networks where the weight assigned to the edges is assumed to be unitary. If $A$ is replaced in the above definition by $\gamma A$, for some appropriate $\gamma\in (0,1)$, the definitions continue to make sense and we are then working with {\it parametric} versions of subgraph centrality and total communicability. 
    \end{remark}

	In the next section we recall the definition of the Mittag--Leffler function and a few properties that will be used in this paper. 
	
\subsection{Mittag--Leffler Functions}\label{sec:themlfunction}
	The family of {\it Mittag--Leffler  (ML) functions}  is a family of analytic functions $E_{\alpha,\beta}(z)$ that were originally introduced in~\cite{mittag1903nouvelle}. For each choice of $\alpha,\beta>0$ they are defined as follows
	\begin{equation} \label{eq:MLfunction}
		E_{\alpha,\beta}(z) = \sum_{r=0}^\infty c_r(\alpha,\beta) z^r= \sum_{r=0}^{\infty} \frac{z^r}{\Gamma(\alpha r + \beta)},
	\end{equation}
	where $c_r(\alpha,\beta)=\Gamma(\alpha r + \beta)^{-1}$ and $\Gamma(z)$ is the {\it Euler Gamma function}:
	$$
	\Gamma(z) = \int_{0}^{\infty}t^{z-1}e^{-t}dt.
	$$

\begin{table}[t]
\centering
		\footnotesize
					\caption{Closed form expression of the Mittag--Leffler function $E_{\alpha,\beta}(z)$ for selected values of $\alpha$ and $\beta$.}\label{tab:closed_forms}
		\begin{tabular}{ccll}
			\toprule
			$\alpha$ & $\beta$ & Function & \\
			\midrule
			0 & 1 & $(1-z)^{-1}$ & Resolvent \\
			1 & 1 & $\exp(z)$ & Exponential \\
			1/2 & 1 & $\exp(z^2)\operatorname{erfc}(-z)$ & Error Function\footnotemark \\
			2 & 1 & $\cosh(\sqrt{z})$ & Hyperbolic Cosine \\
			2 & 2 & $\sinh(\sqrt{z})/\sqrt{z}$ & Hyperbolic Sine \\
			4 & 1 & $1/2 [\cos(z^{1/4}) + \cosh(z^{1/4})]$ & \\
		1 & $k=2,3,\ldots$ &$z^{1-k}(e^z - \sum_{r=0}^{k-2} \frac{z^r}{r!})$& $\varphi_{k-1}(z) = \sum_{r=0}^\infty \frac{z^r}{(r+k-1)!}$ \\
			\bottomrule
		\end{tabular}
	\end{table}
		\footnotetext{$\operatorname{erfc}(z) = 1 - \operatorname{erf}(z)$ is complementary to the error function $\operatorname{erf}(z) = \frac{2}{\sqrt{\pi}}\int_0^z e^{-t^2}dt$.}

For particular choices of $\alpha,\beta>0$, the ML function $E_{\alpha,\beta}(z)$ have nice closed form descriptions. For example, when $\alpha=\beta=1$ we have that $E_{1,1}(z) = {\rm exp}(z)$, since  $\Gamma(1) = \Gamma(2) = 1$ and $\Gamma(r+1) = r\,\Gamma(r) = r!$ for all $r\in\mathbb{N}$. 
We list a few of  these closed form expressions for ML functions in \cref{tab:closed_forms}.

Our goal is to use this family of functions to define new walk-based centrality indices. 
We will focus on the case when $\beta = 1$ and we will adopt from now on the notation $E_{\alpha}(z) = E_{\alpha,1}(z)$. 

Before proceeding, we make two remarks. Firstly, $\Gamma(\alpha r + \beta)>0$ for every $\alpha\geq 0$,  $\beta > 0$, and $r \geq 0$; secondly, the function $g(r) := \Gamma(\alpha r + \beta)$ is not monotonic. In \cref{fig:coeff_gamma} we plot the values of $\Gamma(\alpha r+1)^{-1}$ for $r=0,1,2,3$ and $\alpha = 0,0.1,\ldots,1$. 
	\begin{figure}[t]
	    \centering
	    \includegraphics[width = 0.5\textwidth]{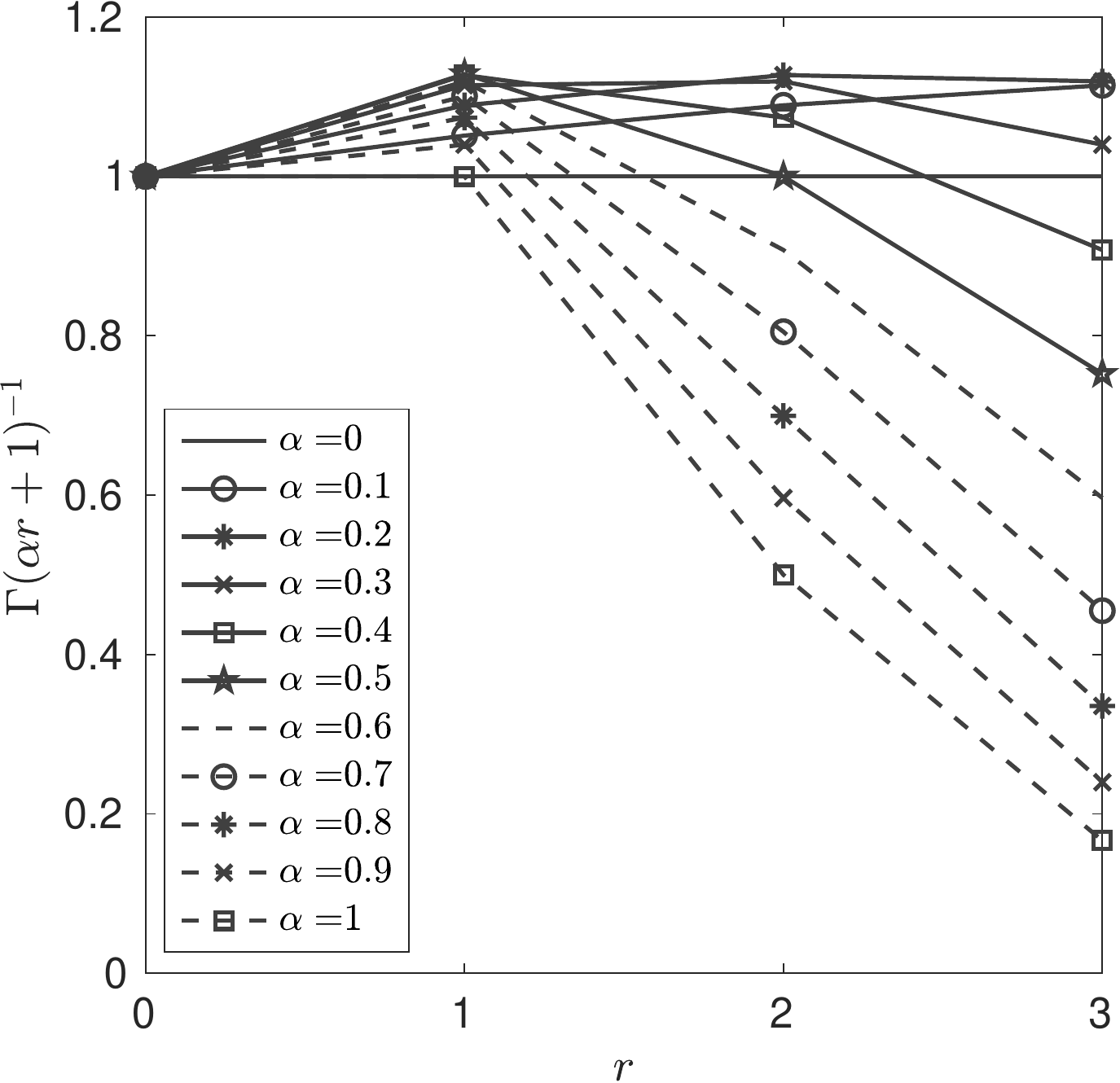}
	    \caption{Plot of $\Gamma(\alpha r+1)^{-1}$ for $r=0,1,2,3$ and $\alpha = 0,0.1,\ldots,1$.}
	    \label{fig:coeff_gamma}
	\end{figure}
	Non-monotonicity of coefficients is not a problem \textit{per se}, however we note that it is customary in network science to define walk-based centrality measures that employ analytic functions with monotonically decreasing coefficients. The reason for this is to foster the intuition that shorter walks should be given more importance than longer ones, because they allow for information to travel faster (i.e., by taking fewer steps) from the source to the target. 
	The fact that the coefficients in the power series expansion of $E_\alpha(z)$ for $\alpha\geq 0$ are not monotonic is something that we will need to be aware of when defining centrality indices for entities in networks. In \cref{lem:monotoniccoeffs} below we will describe how to suitably select a scaling of the adjacency matrix $A$ to ensure monotonicity of the coefficients.
 	
\section{Mittag--Leffler based network indices}\label{sec:MLindices}
    We want to ``close the gap" between resolvent based centrality measures, defined in terms of $f(z) = (1-z)^{-1} = E_{0}(z)$ and  exponential based centrality measures, defined in terms of $f(z) = e^z = E_{1}(z)$. 
    The former function has a discontinuity at $ z= 1$, whilst the latter is entire; however they both can be represented as ML functions. 
    In the following we will 
    \begin{itemize}
        \item review previous appearances of ML functions in network science; 
        \item show that it is possible to describe centrality measures in terms of entries (or sum of entries) of $E_{\alpha}(\gamma A)$ for values of $\alpha$ other than $0$ and $1$ and for suitably selected $\gamma>0$; and
        \item show numerically that careful selection of the parameters $\alpha$ and $\gamma$ allows ML functions to detect information not encoded by degree or eigenvector centrality.
    \end{itemize}
    
    \subsection{Previous appearances of Mittag--Leffler functions}
    We begin by noticing that ML functions have already been employed in the network science literature, often without being recognized as such. The most renowned instances are the previously mentioned exponential and resolvent based centrality measures, introduced in~\cite{MR2171832} and~\cite{katz1953new}, respectively. 
    However, other ML functions have been used. 
    In \cite{MR3005305} the authors introduce new centrality and communicability indices for directed networks by exploiting the representation of such networks as bipartite graphs; see \cite{brualdi1980bigraphs}. 
    In particular, the authors recast the discussion of walk-based centrality measures for directed graph with adjacency matrix $A$ in terms of the symmetric block matrix 
\[
\cal{A} = \begin{bmatrix} 0 & A \\ A^T & 0\end{bmatrix}.
\]    
After showing that 
\[
e^{\cal{A}} = 
\begin{bmatrix}
\cosh(\sqrt{AA^T}) & A(\sqrt{A^TA})^\dagger\sinh(\sqrt{A^TA}) \\
\sinh(\sqrt{A^TA})(\sqrt{A^TA})^\dagger A^T & \cosh(\sqrt{A^TA})) 
\end{bmatrix}
\]
where the superscript $\dagger$ denotes the Moore-Penrose pseudo-inverse, the authors proceed to introduce centrality and communicability indices in terms of diagonal and off-diagonal elements of this matrix exponential; we refer the interested reader to~\cite{MR3005305} for more details.
By referring back to \cref{tab:closed_forms}, it is easy to see that the diagonal blocks rewrite as $E_{2}(AA^T)$ and $E_{2}(A^TA)$, respectively. 
As for the off-diagonal blocks, these as well can be written using the generalized matrix function  induced by $E_{2,2}(z)$; see~\cite{arrigo2016computation,aurentz2019stable,hawkins1973generalized,noferini2017formula} for a complete discussion of generalized matrix functions and their computation. 

To the best of our knowledge, the ML function $E_{1,2}(z)$ has appeared at least twice in the network science literature. The first appearance is in a paper by Estrada~\cite{estrada2010generalized}, where entries of the matrix function $\psi_1(A) = A^{-1} (e^A-I) = E_{1,2}(A)$ are used as a centrality measure for the nodes of an undirected graph represented by the invertible matrix $A$. 
\begin{remark}\label{rem:singularity}
We note in passing that $E_{1,2}(z)=\psi_1(z) = \sum_{r=0}^\infty \frac{z^r}{(r+k-1)!}$ is entire and thus, by \cite[Theorem  4.7]{HigFM}, the matrix function $E_{1,2}(A)=\psi_1(A)$ is defined and given by $\psi_1(A)= \sum_{r=0}^\infty \frac{A^r}{(r+k-1)!}$ even for singular matrices. 
\end{remark} 
In the same paper, the author actually introduces a larger family of measures, all defined in terms of the functions $\psi_{k-1}(z) = E_{1,k}(z)$ for $k=2,3,\ldots$. As in Remark~\ref{rem:singularity}, care should be taken when working with the induced matrix function: the power series expression is well-defined, while the form $A^{1-k}(e^A-\sum_{r=0}^{k-2}A^r)$ is only defined for invertible matrices. 

A second appearance of the matrix function induced by $E_{1,2}(z)=\psi_1(z)$ is in~\cite{arrigo2018exponential}, where the authors show that the non-backtracking exponential generating function for simple graphs is:
\[
\sum_{r=0}^\infty \frac{p_r(A)}{r!} = 
\begin{bmatrix} I & 0\end{bmatrix} \psi_1(Y) 
\begin{bmatrix} A \\A^2-D\end{bmatrix} + I,
\]
where $p_r(A)$ is a matrix whose entries represent the number of non-backtracking walks of length $r$ between any two given nodes, $D$ is the degree matrix, and $Y$ is the first companion linearization of the matrix polynomial $(D-I) -A\lambda +I\lambda^2$:
\[
Y = 
\begin{bmatrix}
0 & I \\
I-D & A
\end{bmatrix}
;\] see~\cite{arrigo2018exponential} for more details and for the discussion of the directed case. 

Yet another instance of Mittag--Leffler function can be found in~\cite{estrada2017accounting} (and more recently in~\cite{estrada2020topological}), where the authors introduce new centrality and communicability indices by exploiting entries of the matrix function induced by
\[
f(z) = \sum_{r=0}^\infty \frac{z^r}{r!!} = \frac{1}{2}\left[\sqrt{2\pi} \,\text{erf}\left(\frac{z}{\sqrt{2}}\right)+2\right]e^{z^2/2},\qquad 
    r!! = \prod_{k=0}^{\lceil \frac{r}{2}\rceil} (r-2k),
\]
which, after a simple manipulation, rewrites as: 
\[
f(z) = \sqrt{\frac{\pi}{2}} E_{1/2}(z/\sqrt{2}) + \left(\sqrt{\frac{\pi}{2}} + 1 \right)E_{1}(z^2/2).
\]

More recently, the matrix function induced by $E_{1/2}(z)$ was used in~\cite{abadias2020fractional} to describe a model for the transmission of perturbations across the amino acids of a protein represented as an interaction network. 

\smallskip

In the following subsection, we discuss two key points concerning interpretation and computability of the matrix functions induced by $E_\alpha(z)$.

\subsection{Parameter selection}
We want to discuss in this section a few technicalities that should be kept in mind when working with Mittag--Leffler functions. 
We discuss two main points: the first concerns the monotonicity of the coefficients (as a function of $r$) appearing in the power series expansion~\eqref{eq:MLfunction} defining $E_\alpha(z)$. This will motivate the use of parametric ML functions $E_\alpha(\gamma z)$ when defining network indices. Secondly, we will discuss issues related to the representability of the entries of $E_\alpha(\gamma A)$ for large matrices and, more generally, for matrices with a large leading eigenvalue. 

We begin by discussing the monotonicity of the coefficients in the power series expansion~\eqref{eq:MLfunction} defining $E_{\alpha}(z)$. 
As previously mentioned in \cref{sec:themlfunction}, the function $g(r):=\Gamma(\alpha r+1)$ is not monotonic for certain values of $\alpha\in(0,1)$; see \cref{fig:coeff_gamma}.  
An immediate consequence of this in our framework is that the matrix function
\[
E_\alpha(A) = \sum_{r=0}^\infty \frac{A}{\Gamma(\alpha r+1)}
\]
is no longer weighting walks monotonically depending on their length. For example, when $\alpha = 0.8$ walks of length one are weighted by the coefficient $c_1(0.8) \approx 0.9$, whilst walks of length five have weight $c_5(0.8) = 24$. 
We want to stress that this may not be an issue in certain application; however, it is usually the case in network science that walks are assigned monotonically decreasing weights with their lengths. 

Let us thus consider the following parametric ML function:
\[
\widetilde{E}_\alpha(z) = E_\alpha(\gamma z) = \sum_{r=0}^\infty \frac{(\gamma z)^r}{\Gamma(\alpha r+1)} = \sum_{r=0}^\infty \widetilde{c}_r(\alpha,\gamma) z^r
\]
where $\widetilde{c}_r(\alpha,\gamma) = \gamma^r c_r(\alpha)$, for suitable values of the weight $\gamma>0$. The next Lemma provides conditions on the admissible vales of $\gamma$ to ensure monotonicity of the coefficients $\widetilde{c}_r(\alpha,\gamma)$.

	\begin{lemma}\label{lem:monotoniccoeffs} 
	Suppose that $\alpha \in (0,1)$. The coefficients $\widetilde{c}_r(\alpha,\gamma) = \gamma^r c_r(\alpha)$ defining the power series for the entire function $\widetilde{E}_\alpha(z) = E_{\alpha}(\gamma z)$ are monotonically decreasing as a function of $r = 0,1,2,\ldots$ for all $0<\gamma<\Gamma(\alpha+1)$.
	\end{lemma}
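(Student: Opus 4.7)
The plan is to reduce the coefficient-monotonicity requirement to a single scalar inequality on $\gamma$ and then identify the worst-case index. Writing the ratio of consecutive coefficients,
$$
\frac{\widetilde{c}_{r+1}(\alpha,\gamma)}{\widetilde{c}_r(\alpha,\gamma)} \;=\; \gamma \cdot \frac{\Gamma(\alpha r+1)}{\Gamma(\alpha(r+1)+1)},
$$
so the inequality $\widetilde{c}_{r+1} \leq \widetilde{c}_r$ is equivalent to $\gamma \leq h(r)$, where
$$
h(r) \;:=\; \frac{\Gamma(\alpha(r+1)+1)}{\Gamma(\alpha r+1)}.
$$
It therefore suffices to show that $\inf_{r\in\mathbb{N}} h(r) = h(0) = \Gamma(\alpha+1)$; monotonic strict decrease of the coefficients then follows for any $\gamma$ with $0<\gamma<\Gamma(\alpha+1)$.

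The central step is to argue that $h$ is nondecreasing on $[0,\infty)$. I would deduce this from the classical log-convexity of the Euler Gamma function on $(0,\infty)$, which is the content of the Bohr--Mollerup characterization. Since $\log\Gamma$ is convex and $x\mapsto \alpha x + 1$ is affine, the composition $\varphi(x) := \log \Gamma(\alpha x + 1)$ is convex on $[0,\infty)$. Convexity forces the unit forward differences $\varphi(r+1)-\varphi(r) = \log h(r)$ to be nondecreasing in $r$, so $h$ itself is nondecreasing. Equivalently, writing $\psi_0 = \Gamma'/\Gamma$ for the digamma function, one has $\log h(r) = \int_r^{r+1}\alpha\,\psi_0(\alpha x+1)\,dx$, and the monotonicity of $\psi_0$ on $(0,\infty)$ gives the same conclusion.

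Combining these two observations yields $h(r) \geq h(0) = \Gamma(\alpha+1)/\Gamma(1) = \Gamma(\alpha+1)$ for every $r\geq 0$, so any $\gamma\in(0,\Gamma(\alpha+1))$ satisfies $\gamma < h(r)$ uniformly in $r$, which is exactly the claim. The proof is short; the only conceptual content is recognizing log-convexity of $\Gamma$ as the right lever. There is no real obstacle, but two sanity checks are worth doing: first, for $\alpha\in(0,1)$ one has $\alpha+1\in(1,2)$, on which $\Gamma$ takes values in $(\min_{x\in[1,2]}\Gamma(x),\,1] \approx (0.8856, 1]$, so the admissible range for $\gamma$ is always nonempty and comparable in order to the usual Katz/exponential scalings; second, the limiting cases $\alpha\to 0^+$ (giving $\gamma<1$, recovering the Katz condition) and $\alpha\to 1^-$ (giving $\gamma<1$, matching the binding constraint $\widetilde{c}_1\leq\widetilde{c}_0$ for $e^{\gamma z}$) are consistent with the bound.
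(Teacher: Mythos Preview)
Your proof is correct and follows essentially the same route as the paper: both reduce to showing that $h(r)=\Gamma(\alpha(r+1)+1)/\Gamma(\alpha r+1)$ is nondecreasing in $r$ so that its minimum occurs at $r=0$. The only cosmetic difference is that the paper verifies this by computing $h'(x)$ explicitly in terms of generalized harmonic numbers, whereas you invoke log-convexity of $\Gamma$ (equivalently, monotonicity of the digamma function)---these are the same fact in different dress.
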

    \begin{proof}
    For each $\alpha\in(0,1)$ we want to determine conditions on $\gamma = \gamma(\alpha)$ that imply that 
    \[\Tilde{c}_r(\alpha,\gamma) \geq \Tilde{c}_{r+1}(\alpha,\gamma) \quad \text{for all } r\in\mathbb{N} \]
    From the definition of $\Tilde{c}_r(\alpha,\gamma)$ we have that the above inequality is equivalent to verifying 
    \begin{equation*}\label{eq:cond_gamma}
    \gamma \leq \frac{\Gamma (\alpha r+\alpha+1)}{\Gamma (\alpha r+1)}, \quad \text{for all } r\geq 0 
    \end{equation*}
    since $\gamma>0$ and $\Gamma(x)>0$ for all $x\geq 0$. 
Since    $H_x$, the \emph{Harmonic number} for $x\in\mathbb{R}$, is an increasing function of $x$, $\alpha>0$ by hypothesis, and $\Gamma(x)>0$ for all $x\geq 0$, it follows that 
\[    \frac{d}{dx}\left(\frac{\Gamma (\alpha x+\alpha+1)}{\Gamma (\alpha x+1)}\right)  = \frac{\alpha \left(H_{ \alpha(x+1)}-H_{\alpha x}\right) \Gamma (\alpha x+\alpha+1)}{\Gamma (\alpha x+1)}\geq 0,\]
and thus the minimum of $\frac{\Gamma (\alpha x+\alpha+1)}{\Gamma (\alpha x+1)}$ is achieved at $x=0$.
\end{proof}

Two choices of the parameter $\alpha$ require further discussion. Suppose that $A\in\mathbb{R}^{n\times n}$ is the adjacency matrix of a simple non-empty graph. 
\begin{itemize}
\item When $\alpha = 0$, then $E_0(\gamma A) = (I -\gamma A)^{-1}$ admits a convergent series expansion if and only if $|\gamma\lambda|<1$ for all $\lambda$ eigenvalues of $A$. The coefficients of this expansion are $\gamma^r$, which are decreasing for all the admissible $0<\gamma\leq \rho(A)^{-1}$.
\item When $\alpha=1$, then $E_1(\gamma A) = {\rm exp}(\gamma A)$ and the coefficients $\gamma^r/r!$ are decreasing for $0<\gamma\leq 1$.
\end{itemize}
In \cref{fig:admissible_beta} we display the area of admissible choices of $\gamma$ as a function of $\alpha\in(0,1]$. 

\begin{figure}[t]
    \centering
    \includegraphics[width=0.4\textwidth]{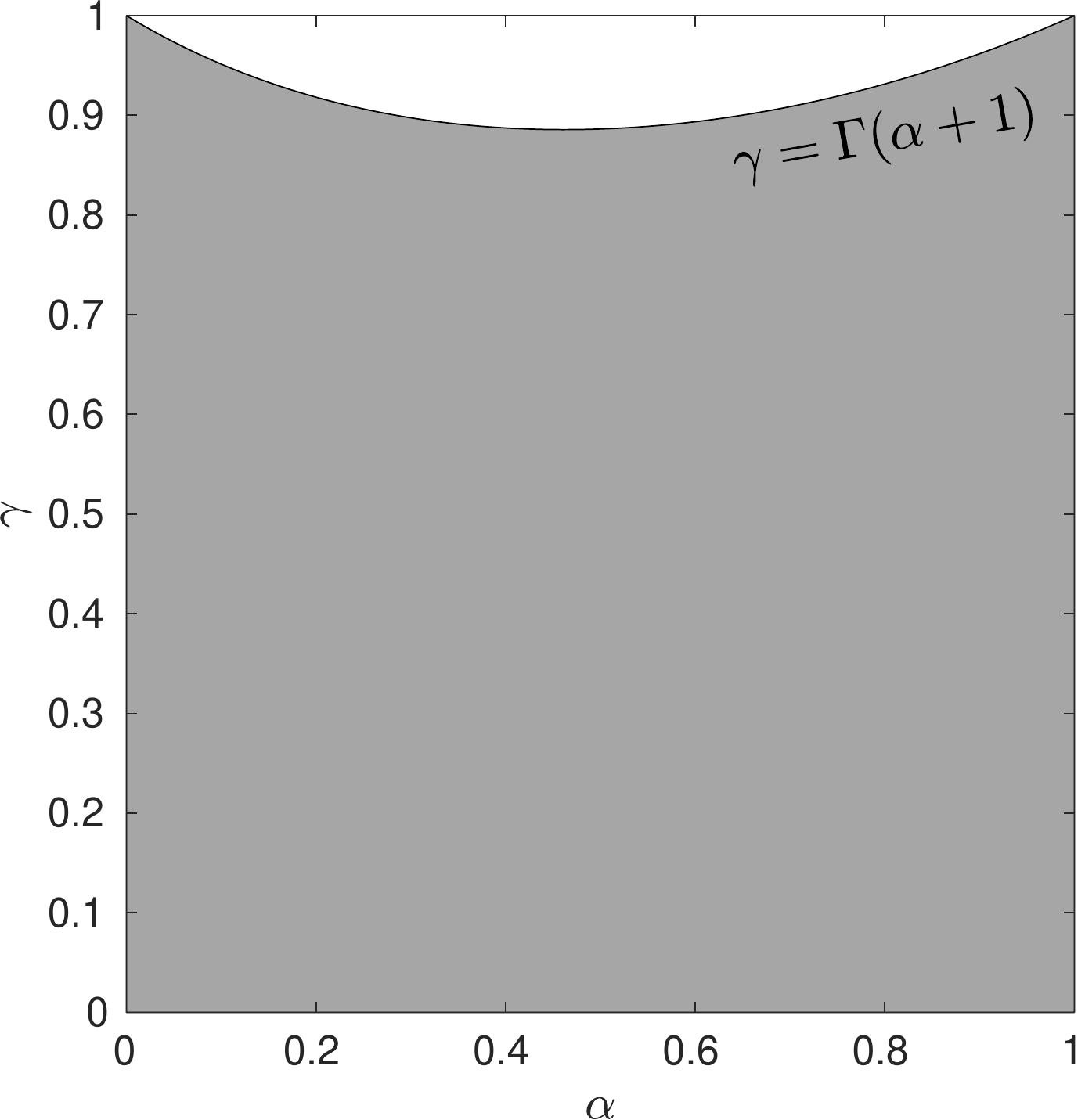}
    \caption{Admissible values of $\gamma$ as a function of $\alpha\in(0,1]$.}
    \label{fig:admissible_beta}
\end{figure}
	
	The take home message of \cref{lem:monotoniccoeffs} wants to be that Mittag--Leffler functions with $\alpha\in (0,1)$ can be employed in network science problems since they have a power series expansion that can be interpreted in terms of walks; however, care should be taken since the coefficients of the ML may not have the desired monotonic behavior.
    In particular, the choice $\gamma = 1$ is not always viable, since it yields non-monotonically decreasing coefficients $c_r(\alpha)$ for those values of $\alpha\in(0,1]$ that satisfy $\Gamma(\alpha+1)<1$, i.e., for all $\alpha\neq 1$. 

\bigskip	

	The second point that we want to address is when the magnitude of the entries of the matrix function $E_\alpha(\gamma A)$ exceeds the largest representable number in machine precision. Consider the spectral decomposition of the adjacency matrix $A = Q\Lambda Q^T$. Then, by definition of matrix function
$E_{\alpha,1}(\gamma A) = \gamma\ Q E_{\alpha}( \Lambda) Q^T$. 	For matrices such that  $\lambda_{\max}(A)$ is large enough, $\gamma E_{\alpha}(\lambda_{\max}(A))$ may be larger than the largest representable number $\Bar{N}$ in machine precision. 
		The following result details the constraint on the values of $\gamma\in (0,1]$ which ensures represenatability of   $E_{\alpha,1}(\gamma\lambda_{\max}(A))$. 
	\begin{lemma}\label{lemma:representable}
	Suppose that $\alpha \in (0,1]$, and $A  \in \mathbb{R}^{n \times n}$ is symmetric. Then for all 
	\[
	\gamma \leq \frac{1}{\lambda_{\max}(A)}\left(\bar{K}\log(10) + \log(\alpha)\right)^\alpha
	\]
	it holds that $\max_{i,j}(|E_{\alpha}(\gamma A)|)_{i,j} \leq \Bar{N}$ where $\Bar{N} \approx 10^{\Bar{K}}$ for a given $\bar{K} \in \mathbb{N}$ is the largest representable number on a given machine.
	\end{lemma}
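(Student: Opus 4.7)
The plan is to reduce the entrywise bound on $E_\alpha(\gamma A)$ to a scalar inequality via spectral decomposition, and then invoke the well-known large-argument asymptotics of the Mittag--Leffler function to solve for $\gamma$.

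First I would exploit symmetry: writing $A = Q\Lambda Q^T$ with $Q$ orthogonal and $\Lambda = \operatorname{diag}(\lambda_1,\ldots,\lambda_n)$, we have $E_\alpha(\gamma A) = Q\,E_\alpha(\gamma \Lambda)\,Q^T$, hence
\[
|(E_\alpha(\gamma A))_{ij}| \;=\; \Bigl|\sum_{k=1}^n q_{ik}q_{jk}\,E_\alpha(\gamma\lambda_k)\Bigr| \;\leq\; \max_k |E_\alpha(\gamma \lambda_k)| \cdot \sum_{k=1}^n |q_{ik}||q_{jk}|,
\]
and Cauchy--Schwarz together with orthonormality of the columns of $Q$ gives $\sum_k |q_{ik}||q_{jk}|\leq 1$. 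Since the power series coefficients of $E_\alpha$ are nonnegative, $|E_\alpha(x)| \leq E_\alpha(|x|)$ for every real $x$, and because $E_\alpha$ is increasing on $[0,\infty)$ we obtain $|E_\alpha(\gamma \lambda_k)| \leq E_\alpha(\gamma\,\rho(A)) = E_\alpha(\gamma\,\lambda_{\max}(A))$, using that for a symmetric (real) matrix the spectral radius coincides with its largest eigenvalue in absolute value (and for a nonempty simple graph this is $\lambda_{\max}(A)$ by Perron--Frobenius).

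Next I would invoke the classical asymptotic expansion of the Mittag--Leffler function: for $\alpha\in(0,1]$ and large positive $t$,
\[
E_\alpha(t) \;\sim\; \tfrac{1}{\alpha}\,\exp\!\bigl(t^{1/\alpha}\bigr),
\]
so (up to the approximation implicit in the statement, signalled by ``$\Bar{N}\approx 10^{\Bar{K}}$'') the condition $E_\alpha(\gamma\,\lambda_{\max}(A)) \leq \Bar{N}$ reduces to
\[
\frac{1}{\alpha}\exp\!\bigl((\gamma\,\lambda_{\max}(A))^{1/\alpha}\bigr) \;\leq\; 10^{\Bar{K}}.
\]
Taking $\log$ of both sides yields $(\gamma\,\lambda_{\max}(A))^{1/\alpha} \leq \Bar{K}\log(10) + \log(\alpha)$; raising to the $\alpha$-th power and dividing by $\lambda_{\max}(A)$ produces exactly the stated bound on $\gamma$.

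The main obstacle is that the argument is asymptotic rather than a strict pointwise inequality: the estimate $E_\alpha(t)\sim\alpha^{-1}e^{t^{1/\alpha}}$ is correct to leading order, but there are algebraically decaying correction terms $-\sum_{k\geq 1} t^{-k}/\Gamma(1-\alpha k)$ that one is implicitly neglecting. This is consistent with the ``$\approx$'' in the definition of $\Bar{N}$, and in the regime of interest (large $\lambda_{\max}(A)$) the leading exponential term dominates so the bound on $\gamma$ is the operative constraint; one could make the inequality strict by absorbing a constant factor into $\Bar{N}$ or by restricting to $t$ beyond a threshold guaranteed by the asymptotic remainder estimates.
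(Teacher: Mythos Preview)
Your proposal is correct and follows essentially the same route as the paper: reduce to the scalar value $E_\alpha(\gamma\lambda_{\max}(A))$ via the spectral decomposition of the symmetric matrix, then apply the leading-order asymptotic $E_\alpha(t)\sim \alpha^{-1}e^{t^{1/\alpha}}$ (the paper cites this as \cite[Proposition~3.6]{MR3244285} with $p=0$) and solve the resulting scalar inequality for~$\gamma$. If anything, you are more careful than the paper in justifying the entrywise reduction (the Cauchy--Schwarz step) and in flagging that the bound is asymptotic rather than a strict inequality; the paper simply absorbs this into the ``$\approx$'' in $\Bar N\approx 10^{\Bar K}$.
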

		
		Before proceeding with the proof, let us recall the following result, which describes an asymptotic expansions for ML functions.

	\begin{proposition}~\cite[Proposition~3.6]{MR3244285}\label{prop:MLasympt}
	Let $0<\alpha<2$ and $\theta \in (\frac{\pi \alpha}{2},\min(\pi,\alpha\pi))$. Then we have the following asymptotics for the Mittag--Leffler function for any~$p \in \mathbb{N}$
	\begin{eqnarray*}
	E_{\alpha}(z) = \frac{1}{\alpha} e^{z^{\frac{1}{\alpha}}} - \sum_{k=1}^{p} \frac{z^{-k}}{\Gamma(1-\alpha k)} + O(\lvert z\rvert^{-1-p}), \, \lvert z \rvert \rightarrow + \infty, \, \lvert \arg(z)\rvert \leq \theta,\\
	E_{\alpha}(z) = - \sum_{k=1}^{p} \frac{z^{-k}}{\Gamma(1-\alpha k)} + O(\lvert z\rvert^{-1-p}), \, \lvert z \rvert \rightarrow + \infty, \, \theta \leq \lvert \arg(z)\rvert \leq \pi.\nonumber
	\end{eqnarray*}
	\end{proposition}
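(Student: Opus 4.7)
The plan is to derive both expansions in Proposition~\ref{prop:MLasympt} simultaneously from a single Hankel-type contour integral representation for $E_\alpha$, and to trace the dichotomy in the statement back to whether a certain pole is crossed during a contour deformation.

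I would start from the representation
\begin{equation*}
E_\alpha(z) = \frac{1}{2\pi i}\int_{\gamma}\frac{\zeta^{\alpha-1}e^{\zeta}}{\zeta^\alpha - z}\,d\zeta,\qquad 0<\alpha<2,
\end{equation*}
in which $\gamma$ is a Hankel contour coming in from $-\infty$ below the negative real axis, looping around the origin, and returning to $-\infty$ above it. This follows by substituting the classical Hankel formula $\Gamma(\alpha r+1)^{-1} = (2\pi i)^{-1}\int_{\gamma}\zeta^{-\alpha r - 1}e^{\zeta}\,d\zeta$ into the series~\eqref{eq:MLfunction} with $\beta=1$, exchanging sum and integral on a contour chosen so that $|z\zeta^{-\alpha}|<1$, and summing the resulting geometric series.

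The next step is to deform $\gamma$ into a wedge contour $\gamma_\theta$ consisting of two rays $\arg\zeta=\pm\theta/\alpha$ joined by a small arc around the origin. The integrand has only the simple pole $\zeta=z^{1/\alpha}$ on the principal branch, whose argument equals $\arg(z)/\alpha$, so the pole is crossed during the deformation precisely when $|\arg z|\leq \theta$. By the residue theorem it then contributes
\begin{equation*}
\operatorname{Res}_{\zeta=z^{1/\alpha}}\frac{\zeta^{\alpha-1}e^{\zeta}}{\zeta^\alpha - z} \;=\; \frac{1}{\alpha}\,e^{z^{1/\alpha}},
\end{equation*}
which is the leading exponential term of the first expansion. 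For $\theta\leq|\arg z|\leq\pi$ the pole stays outside $\gamma_\theta$, no such term appears, and one is left only with the deformed contour integral — this is the sole mechanism distinguishing the two regimes of the proposition.

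On $\gamma_\theta$ I would then expand $(\zeta^\alpha-z)^{-1} = -\sum_{k=1}^{p} z^{-k}\zeta^{\alpha(k-1)} - z^{-p}\zeta^{\alpha p}(z-\zeta^\alpha)^{-1}$ and integrate termwise. Using the inverse Hankel formula $(2\pi i)^{-1}\int_{\gamma_\theta}\zeta^{\alpha k-1}e^{\zeta}\,d\zeta = \Gamma(1-\alpha k)^{-1}$, the $k$-th polynomial piece gives exactly $-z^{-k}/\Gamma(1-\alpha k)$, reproducing the shared algebraic tail in both lines of the statement. The main obstacle is obtaining the uniform $O(|z|^{-1-p})$ bound on the remainder integral in each angular regime. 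The rescaling $\zeta=|z|^{1/\alpha}w$ factors out $z^{-(p+1)}$ and leaves a contour integral in $w$ whose integrand behaves like $w^{\alpha(p+1)-1}e^{|z|^{1/\alpha}w}/(1-w^\alpha e^{-i\arg z})$; because the hypothesis $\theta>\pi\alpha/2$ forces $\theta/\alpha>\pi/2$, along the deformed rays $\operatorname{Re}(|z|^{1/\alpha}w)\to-\infty$, so the rescaled integrand decays exponentially and the $w$-integral is bounded independently of $z$. The technical work then goes into branch-cut bookkeeping, estimating the small arc around the origin, and keeping the bound uniform as $\arg z$ approaches the threshold $\theta$; once these are in place, the algebraic identification of terms via the inverse Hankel formula is essentially automatic.
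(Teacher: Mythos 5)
This proposition is not proved in the paper: it is quoted verbatim from the cited reference \cite[Proposition~3.6]{MR3244285}, so there is no internal argument to compare against. Your sketch (Hankel-type integral representation of $E_\alpha$, residue $\frac{1}{\alpha}e^{z^{1/\alpha}}$ picked up exactly when $\lvert\arg z\rvert\leq\theta$, geometric expansion of $(\zeta^\alpha-z)^{-1}$ combined with Hankel's formula for $1/\Gamma(1-\alpha k)$, and an exponentially decaying remainder on the wedge rays since $\theta/\alpha>\pi/2$) is the standard proof of this classical result and is essentially the one given in that reference, so it is the ``same approach'' in the only meaningful sense here.
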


	\begin{proof}[Proof of \cref{lemma:representable}]
	We have $\lambda_{\max}(\gamma A) =\gamma \lambda_{\max}(A) \in \mathbb{R}$, since $A$ is symmetric; then by \cref{prop:MLasympt}, using the fact that $\arg(z) = 0$ for $z \in \mathbb{R}$, for $p = 0$ we find   
	\begin{equation*}\label{eq:cond_coeff2}
	\frac{1}{\alpha} e^{(\gamma\lambda_{\max}(A))^{\frac{1}{\alpha}}} \leq \bar{N} \approx 10^{\bar{K}}, 
	\end{equation*}
    which immediately yields the conclusion.
	\end{proof}

	Combining the results of \cref{lem:monotoniccoeffs} and \cref{lemma:representable}, we can thus provide the following result which summarizes viable choices of the parameter $\gamma$ for a given choice of $\alpha\in (0,1)$. 
	\begin{proposition}\label{pro:selecting_b}
	Let $A$ be the adjacency matrix of an undirected network with at least one edge and let $\rho(A)>0$ be its spectral radius. Moreover, let $\Bar{N}\approx 10^{\Bar{K}}$ be the largest representable number on a given machine. Then the  Mittag--Leffler function $\widetilde{E}_\alpha(z) = E_{\alpha}(\gamma z)$ is representable in the machine, and admits a series expansion with decreasing coefficients when $\alpha \in (0,1)$ and 
	\begin{equation}\label{eq:m}
0< \gamma \leq  \mu(\alpha):=\min\left\lbrace \Gamma(\alpha+1), \frac{\left( \bar{K}\log(10) + \log(\alpha)\right)^\alpha}{\rho(A)}\right\rbrace.
\end{equation}
	
	    \end{proposition}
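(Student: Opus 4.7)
The plan is to derive the proposition as a direct synthesis of the two preceding lemmas, so the argument is short and essentially amounts to a combining argument plus a few reductions that use the structure of an adjacency matrix of an undirected simple graph.

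First, I would verify that the hypotheses on $A$ put us inside the scope of both lemmas. Since $A$ is the (binary, symmetric) adjacency matrix of an undirected simple graph with at least one edge, $A$ is symmetric and nonnegative with at least one nonzero entry. By Perron--Frobenius $\lambda_{\max}(A)=\rho(A)>0$, which both makes $1/\rho(A)$ well-defined and lets me rewrite the bound of \cref{lemma:representable}, whose statement involves $\lambda_{\max}(A)$, in terms of $\rho(A)$. In particular the bound
\[
\gamma \leq \frac{1}{\lambda_{\max}(A)}\bigl(\bar{K}\log(10)+\log(\alpha)\bigr)^{\alpha}
\]
coincides with
\[
\gamma \leq \frac{\bigl(\bar{K}\log(10)+\log(\alpha)\bigr)^{\alpha}}{\rho(A)},
\]
which is the second term in the minimum defining $\mu(\alpha)$.

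Next, I would invoke \cref{lem:monotoniccoeffs}: under the assumption $\alpha\in(0,1)$, the coefficients $\widetilde c_r(\alpha,\gamma)=\gamma^r/\Gamma(\alpha r+1)$ of $\widetilde{E}_\alpha(z)=E_\alpha(\gamma z)$ are monotonically decreasing in $r$ whenever $0<\gamma\leq \Gamma(\alpha+1)$, which is the first term in the minimum defining $\mu(\alpha)$. Both constraints must hold simultaneously if we want representability in machine precision and monotonicity of the coefficients, so any $\gamma$ satisfying $0<\gamma\leq \mu(\alpha)$ automatically satisfies each one separately; conversely, the minimum is sharp by construction. This yields the stated bound~\eqref{eq:m}.

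No step here is really an obstacle: the only small point to be careful about is that \cref{lem:monotoniccoeffs} is stated with a strict inequality $\gamma<\Gamma(\alpha+1)$ while \cref{pro:selecting_b} uses $\gamma\leq \Gamma(\alpha+1)$; I would remark that equality in the monotonicity inequality $\widetilde c_r\geq \widetilde c_{r+1}$ is still allowed at $r=0$ when $\gamma=\Gamma(\alpha+1)$ (since that is precisely where the minimum of $\Gamma(\alpha x+\alpha+1)/\Gamma(\alpha x+1)$ is attained, as shown in the proof of \cref{lem:monotoniccoeffs}), so ``monotonically decreasing'' is understood in the non-strict sense and the endpoint is admissible. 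Apart from this minor clarification, the proof consists only of combining the two inequalities via a minimum.
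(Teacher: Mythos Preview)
Your proposal is correct and matches the paper's approach exactly: the paper presents \cref{pro:selecting_b} without a separate proof, simply as the combination of \cref{lem:monotoniccoeffs} and \cref{lemma:representable}, which is precisely the synthesis you carry out. Your extra care about $\lambda_{\max}(A)=\rho(A)$ and the strict-versus-nonstrict endpoint at $\gamma=\Gamma(\alpha+1)$ only makes the argument cleaner than what the paper spells out.
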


\subsection{Mittag--Leffler network indices}
    In this subsection we define centrality indices in terms of functions of the adjacency matrix induced by ML functions. Similarly, communicability indices defined in terms of the off-diagonal entries of the relevant matrix functions can also be introduced.

\begin{definition}\label{def:measure}
Let $A$ be the adjacency matrix of a simple graph $G = (V, E)$. Let $\alpha\in[0,1]$ and let $0<\gamma<\Gamma(\alpha+1)$, so that \cref{lem:monotoniccoeffs} holds. Then, for all nodes $i\in V = \{1,2,\ldots,n\}$ we define: 
    \begin{itemize}
        \item ML-subgraph centrality: \[s_i(\widetilde{E}_\alpha) = E_\alpha(\gamma A)_{ii}\]
        \item ML-total communicability: \[t_i(\widetilde{E}_\alpha) = (E_\alpha(\gamma A)\bone)_{i}\]
    \end{itemize}
\end{definition}

Since $\gamma$ satisfys the hypothesis of \cref{lem:monotoniccoeffs}, the coefficients $\frac{\gamma^r}{\Gamma(\alpha r+1)}$ in the power series representation of $E_\alpha(\gamma A)$ are monotonically decreasing. We can thus interpret the entries of this matrix function as a weighted sum of the number of walks taking place in the network with longer walks being given less weight than shorter ones.
    \begin{remark}
    Similarly, an index of subgraph communicability can be defined as $C_{ij}(\widetilde{E}_\alpha) = E_\alpha(\gamma A)_{ij}$ for all $i,j\in V$, $i\neq j$.
    \end{remark}

 These centrality %
 indices arise as a straightforward extension of known theory for undirected graphs, namely the exponential-based subgraph centrality and total communicability and their resolvent-based analogues.
	The newly introduced indices all belong to the class of indices studied in \cite{MR3354998}; Indeed, it can be easily shown that the rankings induced by the subgraph centrality and total communicability indices $\svec(\widetilde{E}_\alpha(A))$ and $\tvec(\widetilde{E}_\alpha(A))$ converge to those induced by degree and eigenvector centrality as $\gamma \to 0$ and  as $\gamma\to\infty$ (or $\gamma\to\rho(A)^{-1}$, when $\alpha=0$), respectively. It is worth mentioning that the upper limit considered here is the same as it was considered in \cite{MR3354998}, although the results presented in \cref{pro:selecting_b} provide a different upper bound on the admissible values for $\gamma$. 

It can be further shown that the measures here introduced converge to those induced by the exponential and by the resolvent as we keep the value of $\gamma$ fixed and we let the parameter $\alpha$ vary in the interval $(0,1)$. 
Here, the convergence is actually shown for the centrality scores, rather than just for the induced ranking. 
Indeed, suppose that $\gamma<\min\left\lbrace \Gamma(\alpha+1), 1/\rho(A)\right\rbrace$, so that the power series expansion for $E_\alpha(\gamma A)$ converges for all values of $\alpha$ and the coefficients appearing in said series are monotonically decreasing. 
Then it is straightforward to show that,
\begin{itemize}
\item for $f(z) = (1-\gamma z)^{-1}$,
\[\lim_{\alpha \to 0}\svec(\widetilde{E}_\alpha) = \svec(f)\,\text{ and }\, \lim_{\alpha \to 0}\tvec(\widetilde{E}_\alpha) = \tvec(f);\]
\item for $f(z) = e^{\gamma z}$
\[\lim_{\alpha \to 1}\svec(\widetilde{E}_\alpha) = \svec(f)\,\text{ and }\,\lim_{\alpha \to 1}\tvec(\widetilde{E}_\alpha) = \tvec(f).\]
\end{itemize}
\cref{fig:convergecebehavior} schematically summarizes these results. 
	
	\begin{figure}[t]
		\centering
		\includegraphics[width=0.7\columnwidth]{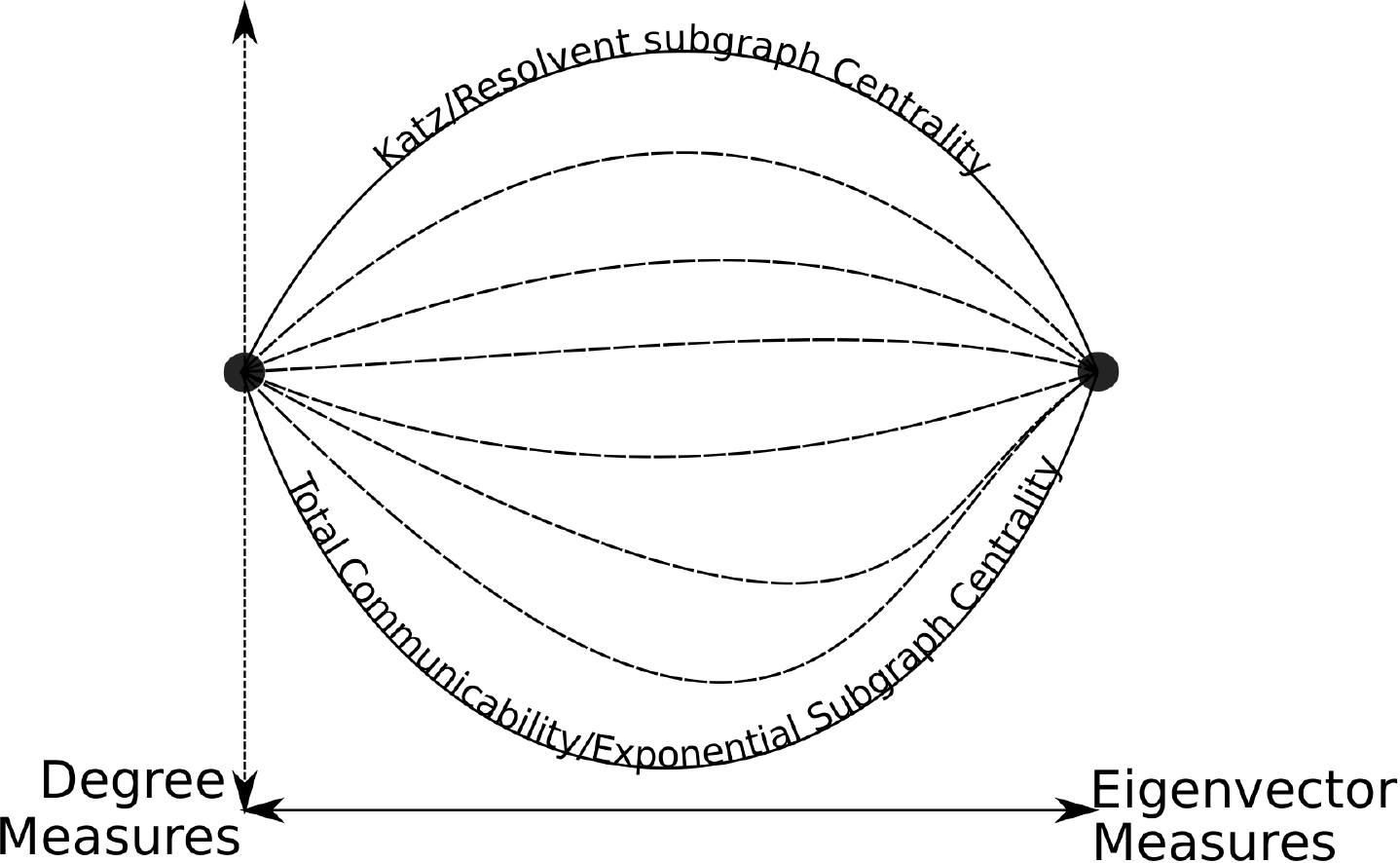}
		\caption{Asymptotic behavior of the new Mittag--Lefller based measures.}\label{fig:convergecebehavior}
	\end{figure}

\subsection{Computing the ML function} 
The computation of the ML function $E_{\alpha,\beta}(z)$ is far from being straightforward. Indeed, for different parts of the complex plane one has to advocate different numerical techniques with different degrees of accuracy. Furthermore, when one wants to compute the induced matrix function for the case of non-normal matrices, the derivatives of arbitrary order also need to be computed. 
However, for particular choices of the parameters $\alpha$ and $\beta$ we could employ specialized techniques; for example, when $\alpha=\beta=1$ the ML function reduces to the exponential, and for this matrix function there are several techiniques available in the literature; see, e.g.,~\cite{moler1978nineteen} and references therein.  
In this paper we are faced with the problem of computing $E_\alpha(z)$ for arbitrary choices of $\alpha$. To accomplish this task, we use the techniques and the code developed in~\cite{MR3850348}. Furthermore, to compute the total communicability $\mathbf{t}(E_{\alpha})$ we deploy such approach in a standard polynomial Krylov method. 
In a nutshell, we are projecting the problem of computing $E_{\alpha,\beta}(\gamma A)\mathbf{v}$ on the subspace $\mathcal{K}_m(A,\mathbf{v}) = \{\textbf{v},A\textbf{v},\ldots,A^{m-1}\textbf{v}\}$, that is, we  compute the approximation $E_{\alpha,\beta}(\gamma A)\mathbf{v} \approx V_m E_{\alpha,\beta}(\gamma V_m^T A V_m )\textbf{e}_1$, where $V_m = [\textbf{v}_1,\ldots,\textbf{v}_m]$ is a basis of $\mathcal{K}_m(A,\textbf{v})$, and $\textbf{e}_1$ the first vector of the canonical basis of $\mathbb{R}^m$.
For an analysis of the convergence of such method we refer the interested reader to \cite[Theorem~3.7]{MoreNovati-Krylov}. 
In fact, one could also employ rational Krylov methods pursuing a  trade-off between the size of the projection subspace and the cost of the construction of the basis $V_m$. For the analysis of this other approach, please see \cite{MoreNovati-Krylov, MoretPopolizio-SAIML}. 

In the experiments presented in this paper, as mentioned above, we considered  polynomial methods, which  already gave satisfactory performances. 

\subsection{Numerical experiments - centrality measures}

In this section we explore numerically how the measures introduced in \cref{def:measure} compare with eigenvector centrality and degree centrality as we let $\alpha$ and $\gamma$ vary. To make the comparison, we use of the Kendall correlation coefficient~\cite{kendall1948rank}: the higher the coefficient, the stronger the correlation. 
The networks analysed here are two networks freely available at~\cite{SUITESPARSE}. 
The network {\sc Newman/Dolphins} contains $n=62$ nodes and $m = 139$ undirected edges. Its largest eigenvalues are $\lambda_1=   7.19$ and $\lambda_2 =  5.94$.
The network {\sc Gleich/Minnesota} contains $n=2640$ nodes and $m=3302$ undirected edges. Its largest eigenvalues are $\lambda_1 =  3.2324$ and $\lambda_2= 3.2319$, and therefore this network has a relatively small spectral gap $\lambda_1-\lambda_2$. 
Results are displayed in \cref{fig:kendall_sc_dolphins,fig:kendall_tc_dolphins,fig:kendall_sc_minnesota,fig:kendall_tc_minnesota}. In these figures we also plot a solid line to display the value of $\mu(\alpha)$ in \cref{eq:m}: this provides an upper bound on the admissible values of $\gamma$. We note in passing that the function accurately profiles the NaN region in each of our plots, which corresponds to values of $\alpha$ and $\gamma$ for which the computed measures exceeded machine precision. 

In \cref{fig:kendall_sc_dolphins}-\ref{fig:kendall_tc_dolphins}, we observe that, after the maximum of $\mu(\alpha)$, the  correlation of the newly computed measure with eigenvector centrality (\cref{fig:kendall_tc_eig_dolphins} and \cref{fig:kendall_sc_eig_dolphins}) increases as $\alpha$ increases and $\gamma$ increases, even above the curve $\mu(\alpha)$. This demonstrates the known fact, proved in~\cite{MR3354998}, that ML functions induce centrality measures that provide the same ranking as eigenvector centrality when $\gamma\to\infty$.  
In \cref{fig:kendall_sc_deg_dolphins} and \cref{fig:kendall_tc_deg_dolphins}, on the other hand, we achieve larger values of the Kendall $\tau$ for small values of $\gamma$, regardless of the value of $\alpha$, as expected.

Similar results were achieved for the network {\sc Gleich/Minnesota} in  \cref{fig:kendall_sc_minnesota}-\ref{fig:kendall_tc_minnesota},  although not strong correlation is observed between the new indices and eigenvector centrality. This is again a known result, and it is due to the small spectral gap of the adjacency matrix of this network. 
For this graph it is however interesting to note the high degree of correlation between the new measure and eigenvector centrality for small values of $\alpha$. 
 {\begin{remark} We visually inspected a few of the top ranked nodes according to degree centrality, eigenvector centrality, and ML-subgraph centrality and ML-total communicability for different values of $\alpha$ and $\gamma$ (ten nodes for {\sc Gleich/Minnesota} and $20\%$ of the total number of nodes for {\sc Newman/Dolphins}). 
We can confirm that the ML measures, where well defined, return results comparable with those presented for the whole network when working on {\sc Newman/Dolphins}. The results are not as good for {\sc Gleich/Minnesota}, as one would expect because of the network's spectral gap. 
We refer the interested  reader to the Supplementary Material for further details.\end{remark}}

One interesting feature of all these plots is that the centrality measures studied seem to strongly correlate with either degree centrality or eigenvector centrality, with only a small interval of values of $\gamma$ for each $\alpha$ where the correlation is not strong. This in particular has implications when we consider the two most popular Mittag--Leffler functions used in the literature: $e^{\gamma x}$ and $(1-\gamma x)^{-1}$. Indeed, this result shows that most of the choices of $\gamma$, the downweighting parameter (for resolvent-based measures) or inverse temperature (for exponential-based measures), return rankings that can be obtained by simply computing the eigenvector centrality or the degree centrality of the network. However, if one can hit the ``sweet spot", with values of $\alpha$ and $\gamma$ that return centralities not strongly correlated with the two classical ones, using these measures will certainly add value to the analysis. A similar observation was made in the Supplementary material of~\cite{MR3354998}, where the authors write:{\it ``Thus, the most information is gained by using resolvent based centrality measures when $0.5/\rho(A)\leq \gamma\leq 0.9/\rho(A)$. This supports the intuition from section 5 of the accompanying paper that “moderate” values of $\gamma$ provide the most additional information about node ranking beyond that provided by degree and eigenvector centrality"}. We plan to investigate this phenomenon further and to describe ways to select $\gamma$ for each value of $\alpha$ in future work. We note that some work in a similar direction was conducted in \cite{aprahamian2016matching}.

\begin{figure}[tbhp]
\centering
\subfloat[]{\label{fig:kendall_sc_deg_dolphins}\includegraphics[width=0.45\textwidth]{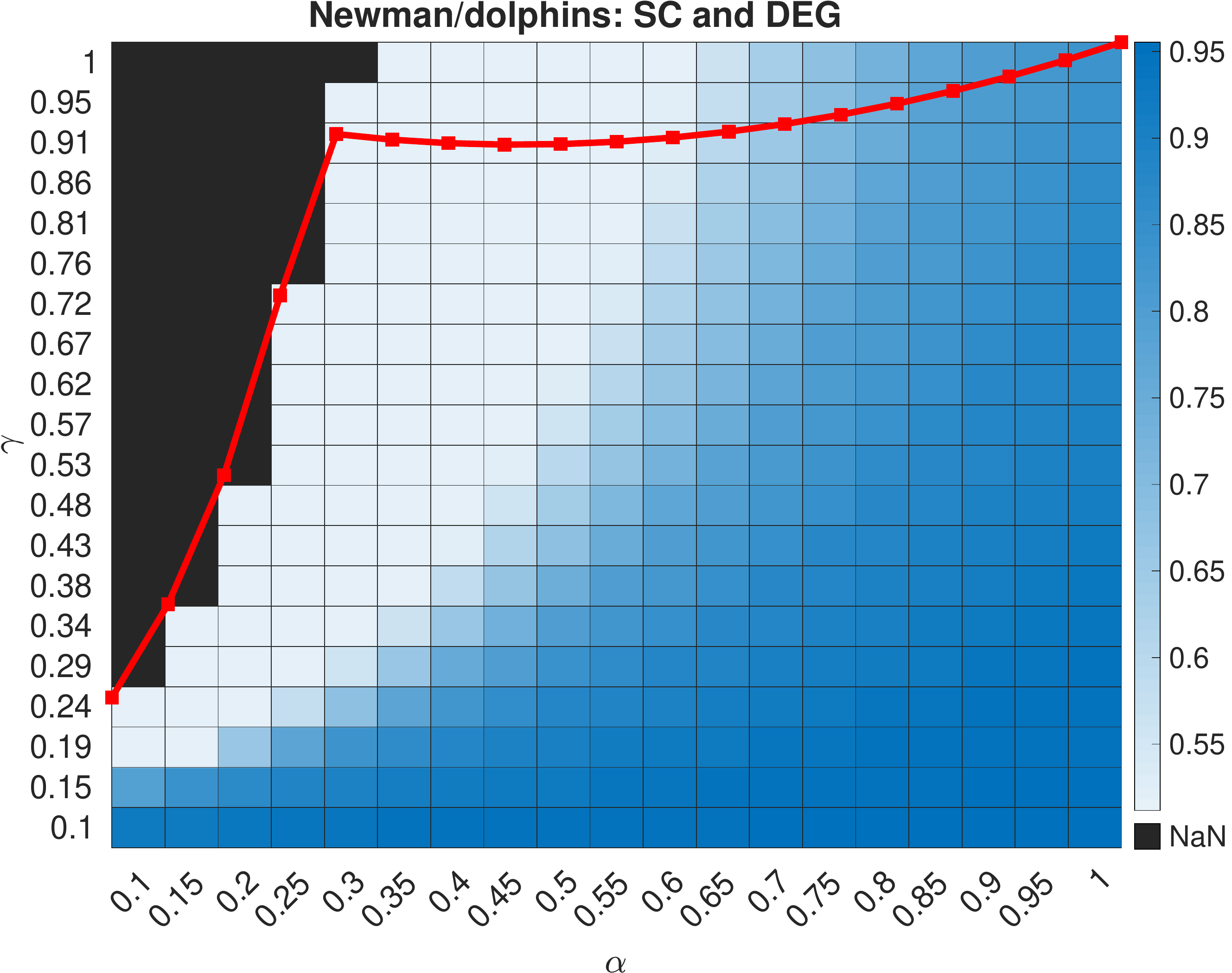}}\;
\subfloat[]{\label{fig:kendall_sc_eig_dolphins}\includegraphics[width=0.45\textwidth]{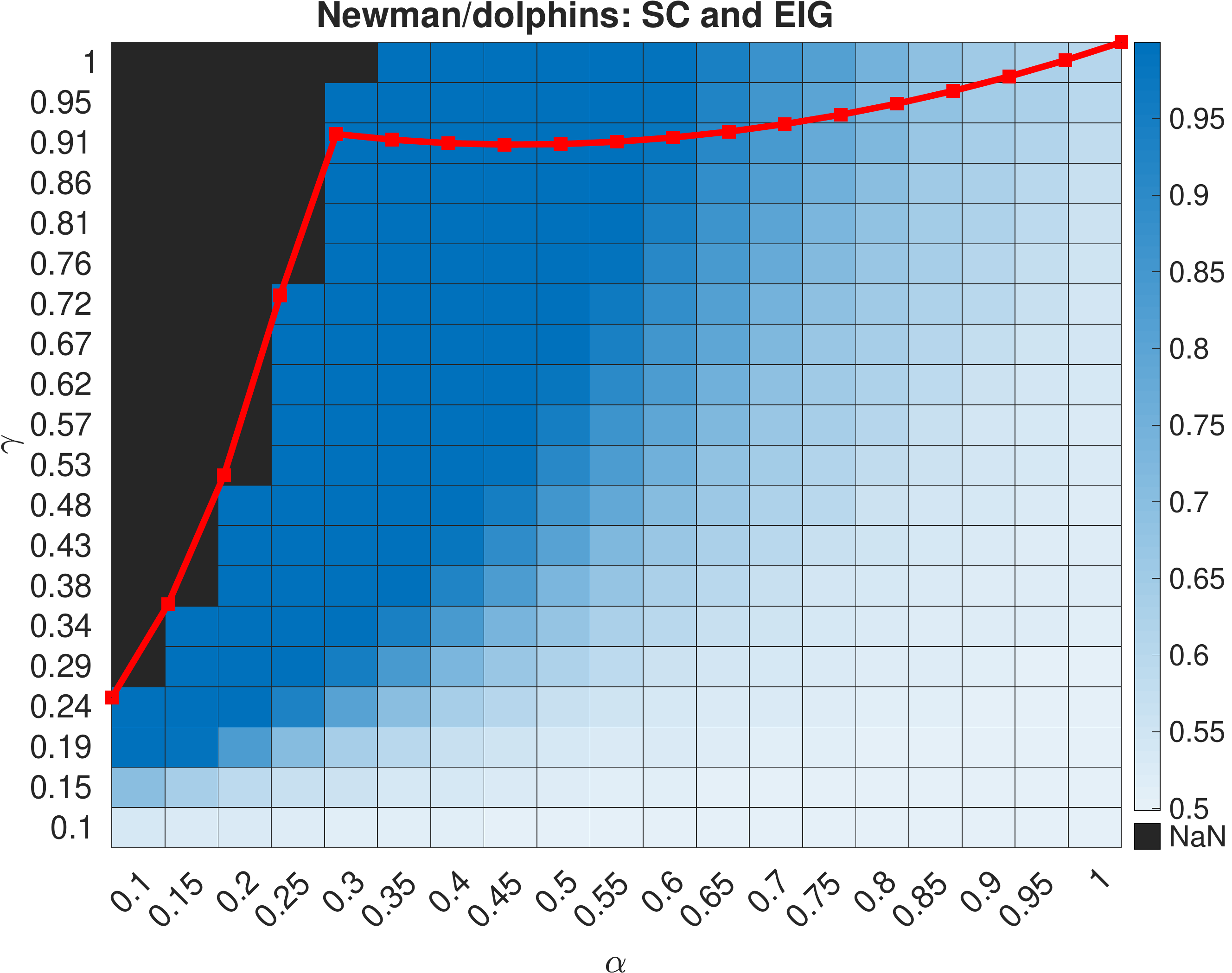}}
\caption{Network: {\sc Newman/Dolphins}. Kendall correlation coefficient between the ranking induced by subgraph centrality vectors $\svec(\widetilde{E}_\alpha)$ and by (a) degree centrality or (b) eigenvector centrality  for different values of $\gamma$ and $\alpha$. The red line displays the value of $\mu$ in \eqref{eq:m}.}
\label{fig:kendall_sc_dolphins}
\end{figure}

\begin{figure}[tbhp]
\centering
\subfloat[]{\label{fig:kendall_tc_deg_dolphins}\includegraphics[width=0.45\textwidth]{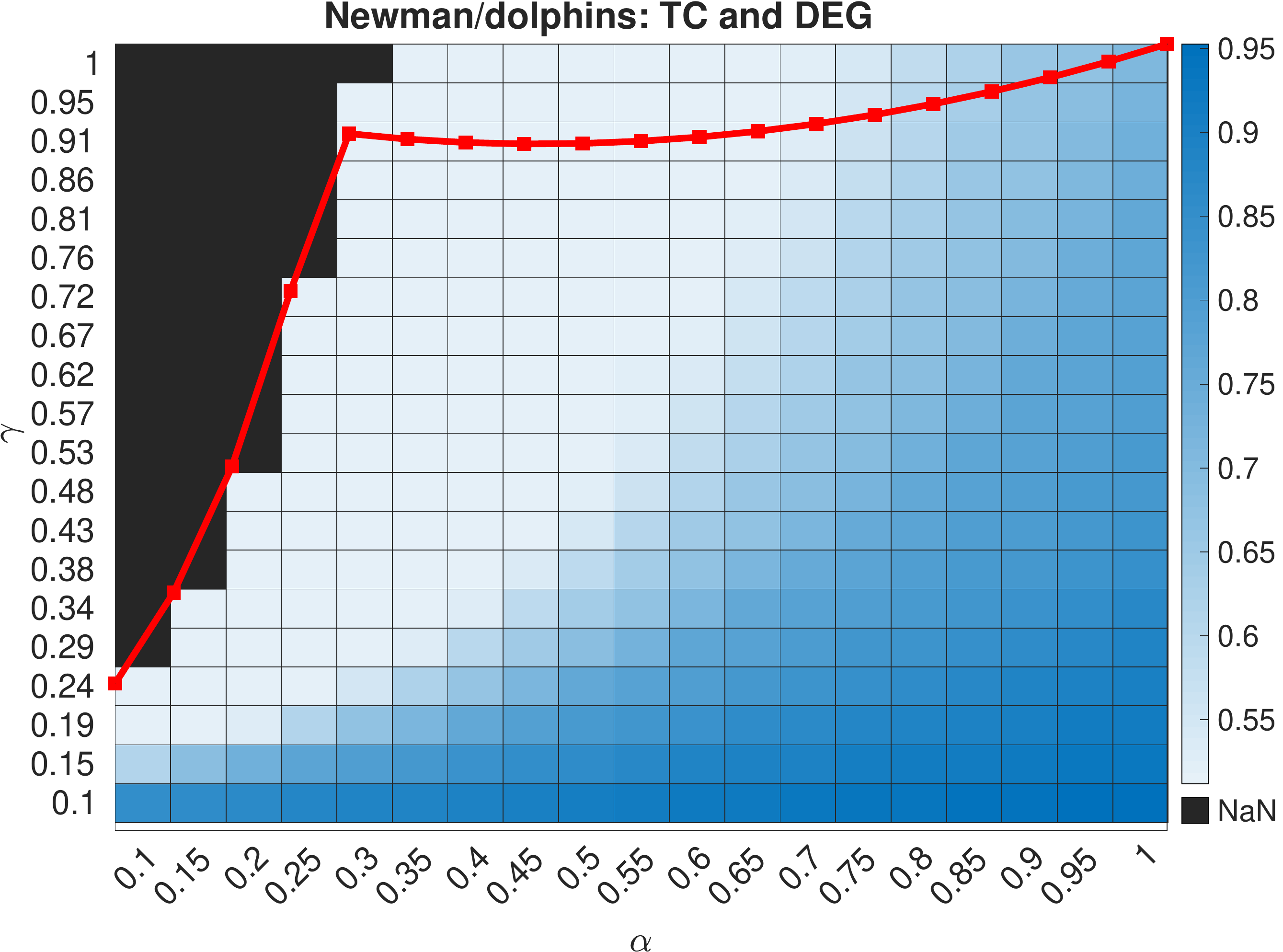}}\;
\subfloat[]{\label{fig:kendall_tc_eig_dolphins}\includegraphics[width=0.45\textwidth]{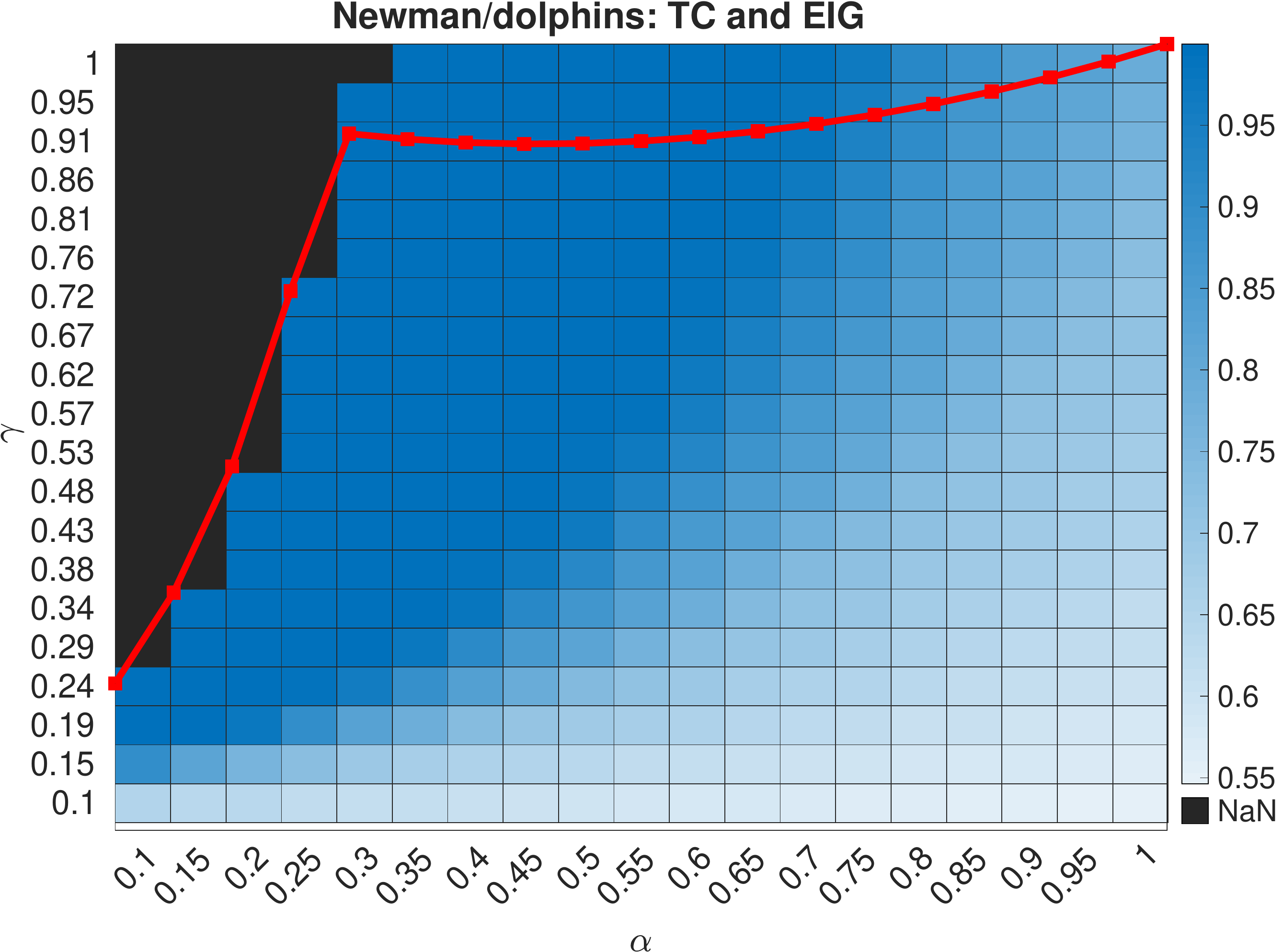}}
\caption{Network: {\sc Newman/Dolphins}. Kendall correlation coefficient between the ranking induced by total communicability vectors $\tvec(\widetilde{E}_\alpha)$ and by (a) degree centrality or (b) eigenvector centrality  for different values of $\gamma$ and $\alpha$. The red line displays the value of $\mu$ in \eqref{eq:m}.}
\label{fig:kendall_tc_dolphins}
\end{figure}

\begin{figure}[tbhp]
\centering
\subfloat[]{\label{fig:kendall_sc_deg_minnesota}\includegraphics[width=0.45\textwidth]{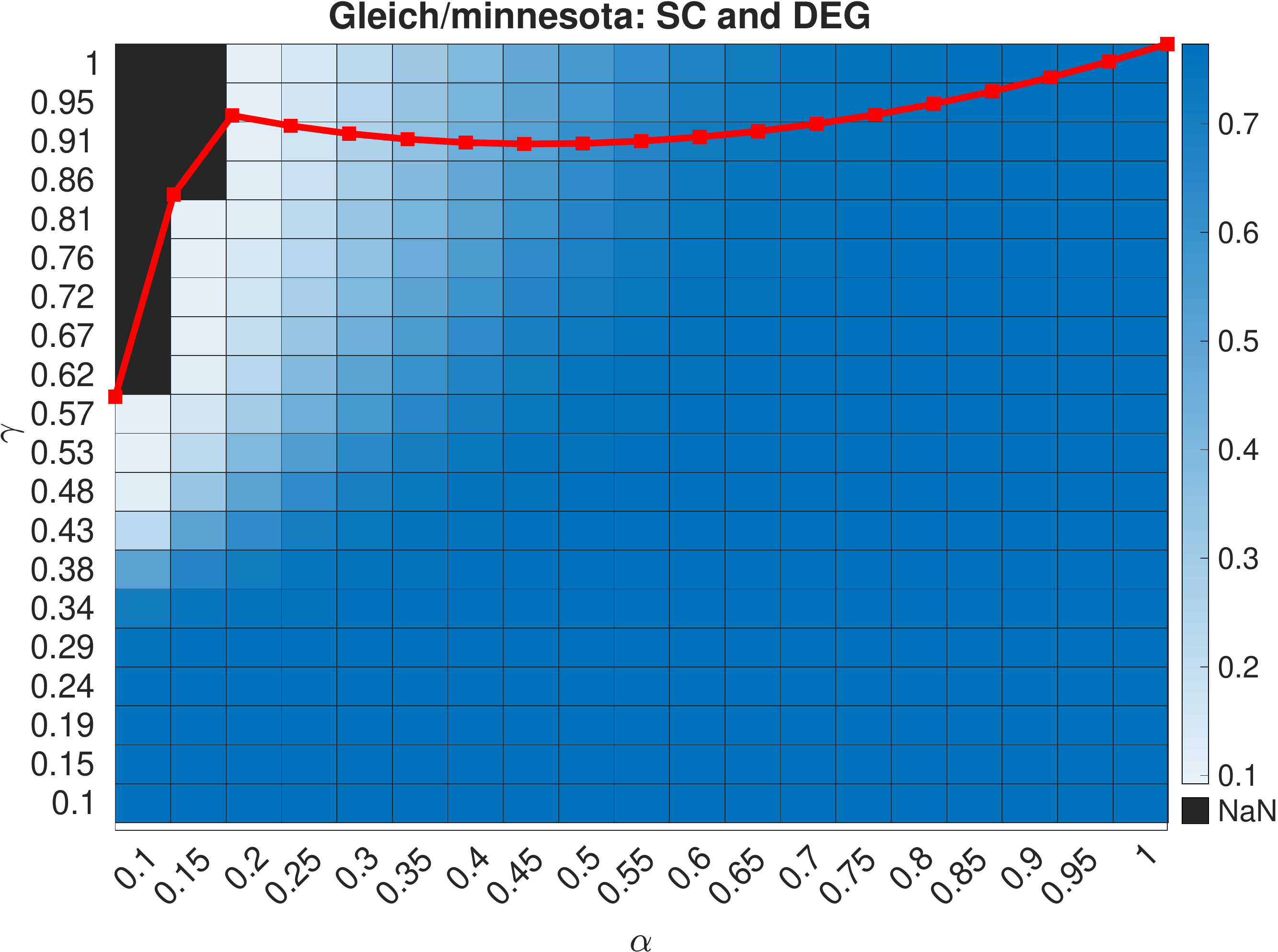}}\;
\subfloat[]{\label{fig:kendall_sc_eig_minnesota}\includegraphics[width=0.45\textwidth]{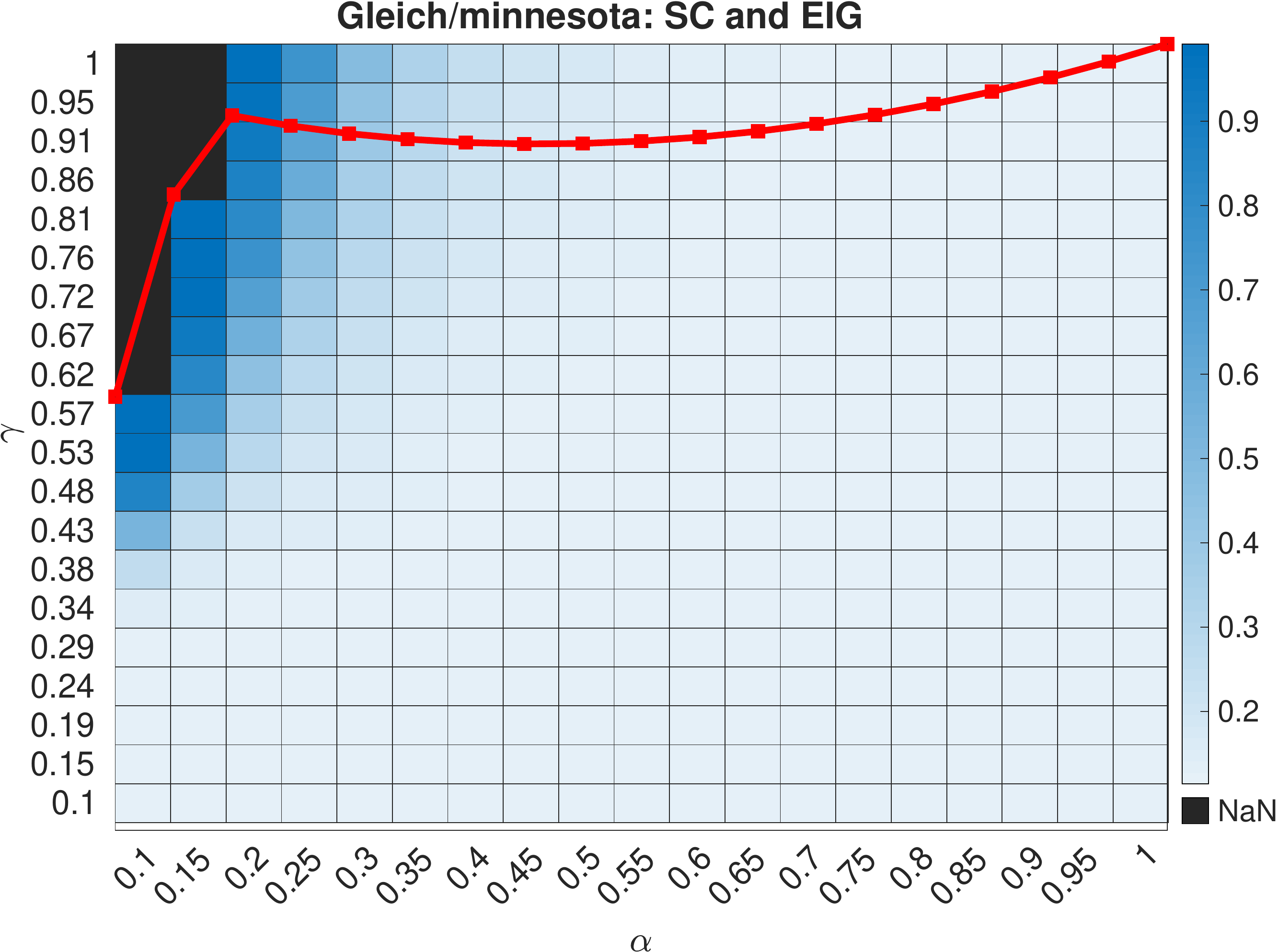}}
\caption{Network:  {\sc Gleich/Minnesota}. Kendall correlation coefficient between the ranking induced by subgraph centrality vectors $\svec(\widetilde{E}_\alpha)$ and by (a) degree centrality or (b) eigenvector centrality  for different values of $\gamma$ and $\alpha$. The red line displays the value of $\mu$ in \eqref{eq:m}.}
\label{fig:kendall_sc_minnesota}
\end{figure}

\begin{figure}[tbhp]
\centering
\subfloat[]{\label{fig:kendall_tc_deg_minnesota}\includegraphics[width=0.45\textwidth]{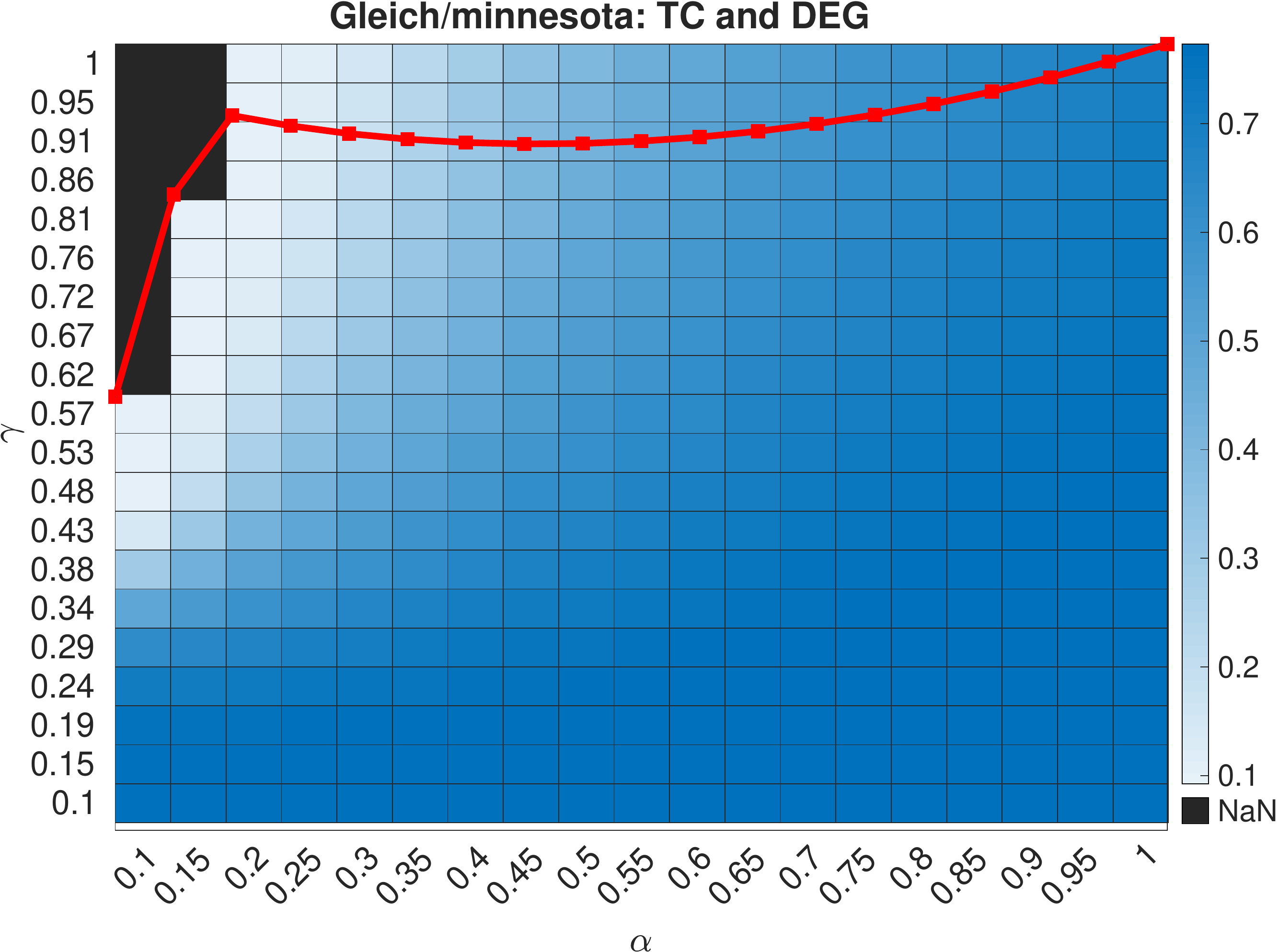}}\;
\subfloat[]{\label{fig:kendall_tc_eig_minnesota}\includegraphics[width=0.45\textwidth]{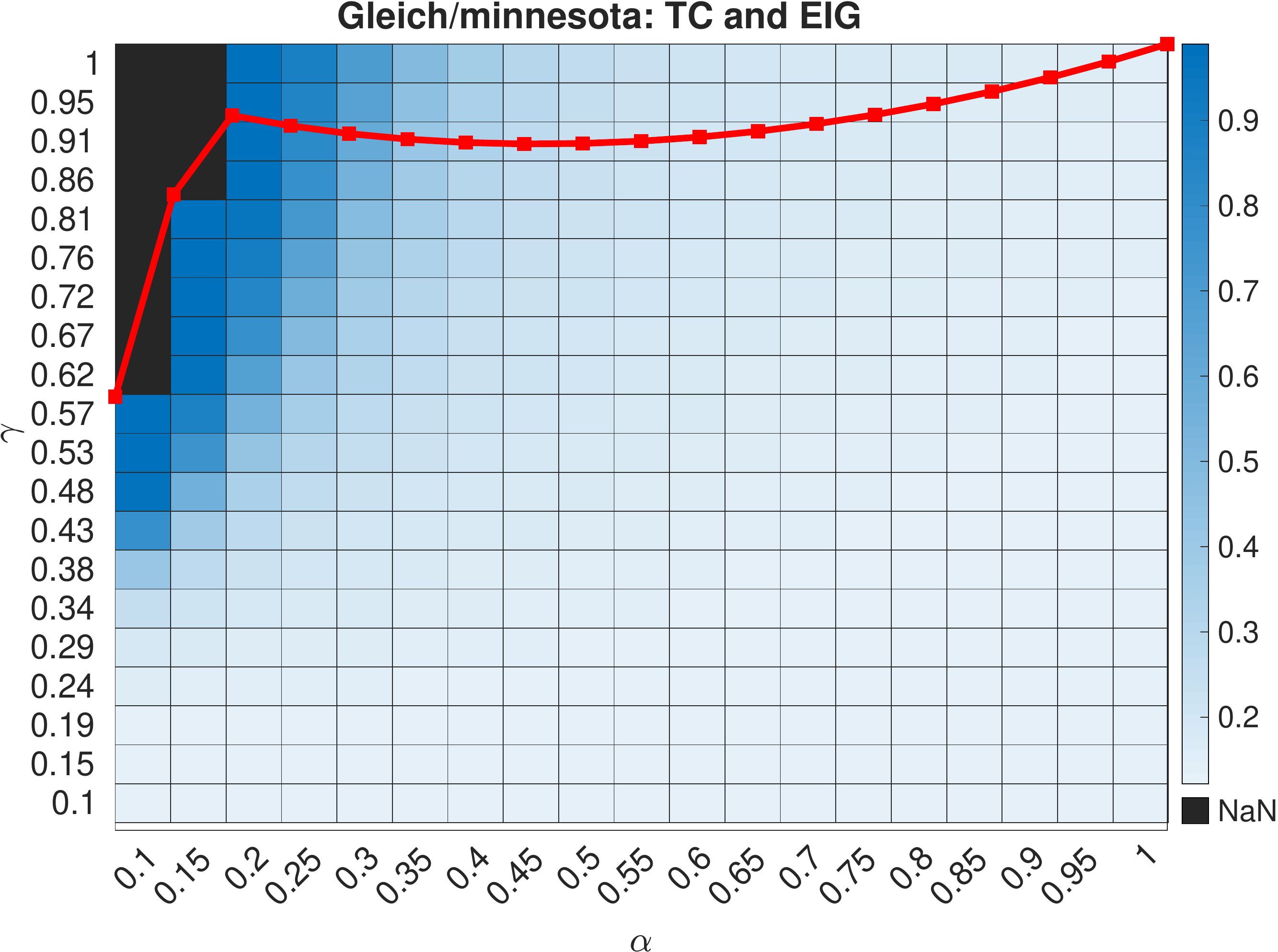}}
\caption{Network:  {\sc Gleich/Minnesota}. Kendall correlation coefficient between the ranking induced by total communicability vectors $\tvec(\widetilde{E}_\alpha)$ and by (a) degree centrality or (b) eigenvector centrality  for different values of $\gamma$ and $\alpha$. The red line displays the value of $\mu$ in \eqref{eq:m}.}
\label{fig:kendall_tc_minnesota}
\end{figure}

\section{Temporal networks}\label{sec:temporal}
Networks are often evolving over time, with edges appearing, disappearing, or changing their weight as time progresses~\cite{holme2012temporal}. Consider a time-dependent network $G = (V,E(t))$ where the nodes remain  unchanged over time, while the edge set $E(t)$ is time-dependent. 
This type of graphs can be described using a time-dependent adjacency matrix $A(t) \, :\, \mathbb{R} \to \mathbb{R}^{n \times n}$, whose regularity depends on the way in which the edges evolve; 
for example, if one wishes to model phenomena characterized by instantaneous activities, then the resulting $t\mapsto A(t)$ will be a discontinuous and rapidly changing function. This model is suited for, e.g., an e-mail communication network, where the different email addresses are the nodes and connections among them are present whenever there is an e-mail exchange between them at a given time $t$~\cite{Stomakhin_2011}. 
On the other hand, suppose that we want to model the number of people entering/leaving a train station. We can assign the value $0$  to the situation where the station is completely empty and value $1$ to the station at full capacity. Then, the function $t\mapsto A(t)$ is at least continuous, and the entries of $A(t)$ take values in $[0,1]$ at all times. 

\smallskip

In the following we will show how the theory of ML functions allows for a generalization of the model presented in~\cite{MR3177230}. This generalization will overcome a known issue of resolvent-based centrality measure for temporal networks: the choice of the downweighting parameter $\gamma$; see \cref{rem:temporal_gamma,rem:temporal_gamma2} below. 
Using ML functions with $\alpha>0$ will automatically free the choice of $\gamma$ from any constraint related to the history of the network, and this parameter will only need to satisfy the conditions prescribed in \cref{pro:selecting_b}.

\smallskip

In~\cite{MR3177230} the authors introduced a real-valued, (possibly) nonsymmetric dynamic communicability matrix $S(t) \in \mathbb{R}^{n \times n}$ which encodes in its $(i,j)$ entry the ability of node $i$ to communicate with node $j$ using edges {\it up to} time $t$ by counting the walks that have appeared until time $t$. 
For a small time interval $\Delta \ll 1$, 
\begin{equation}\label{eq:constitutive_model}
 S(t + \Delta) = [ I + e^{-b \Delta} S(t) ] [I - \gamma A(t+\Delta) ]^{-\Delta} -I, \quad S(0) = 0, \quad   \gamma,b \in\mathbb{R}_{> 0}.
\end{equation}
For any pair on nodes $i\neq j$ and a single time frame such choice reduces to the classical Katz resolvent-based measure,
\[ S(t_0) + I = (I - \gamma A(t_0))^{-1}, \]
and, more generally, for a discrete-time network sequence $\{t_i\}_{i=1}^{N}$ and $b=0$, to the generalized Katz centrality measure introduced in~\cite{PhysRevE.83.046120},
\[ S(t_N) + I = \prod_{i=1}^{N} (I - \gamma A(t_i) )^{-1}.\]
\begin{remark}\label{rem:temporal_gamma}
From the above equation it follows immediately that, for each resolvent to be well defined, $\gamma$ needs to be smaller than the smallest of all $\rho(A(t_i))^{-1}$. This in turn implies that, in order to compute $S(t_N)+I$, one needs to have complete knowledge of the evolution of the network up to time $t_N$.
\end{remark}
By letting $U(t) = I + S(t)$, expanding in Taylor series to the first order the right-hand side of~\eqref{eq:constitutive_model} and rearranging the terms, one can rewrite the constitutive relation as 
\begin{equation*}
\frac{U(t + \Delta) - U(t)}{\Delta} = b (I-U(t))-U(t) \log (I-\gamma A(t)) + O(\delta ),
\end{equation*}
and thus, by letting $\Delta\to 0$, obtain the non-autonomous Cauchy problem 
\begin{equation}\label{eq:higham_model}
    \begin{cases}
    U'(t) = - b (U(t)-I) - U(t) \log(I - \gamma A(t)), & t > 0,\\
    U(0) = I.
    \end{cases}
\end{equation}
\begin{remark}\label{rem:temporal_gamma2}
Existence of a principal determination of the matrix logarithm function is guaranteed when $ \gamma< \rho(A(t))^{-1}$ for all $t\geq 0$. This implies that, much like in the discrete case, the full temporal evolution of our network has to be known before deriving $U(t)$.
\end{remark}

As suggested in the original paper, alternative approaches can be considered by replacing the resolvent function with an opportune matrix function $f(\gamma A(t))$, i.e., by moving from the Katz centrality measure to a general $f$-centrality,
\begin{equation*}
    \begin{cases}
    W'(t) = - b (W(t)-I) - W(t) \log( f(\gamma A(t)) ), & t > 0,\\
    W(0) = I.
    \end{cases}
\end{equation*}
Just like in the static case we want to employ Mittag--Leffler functions, i.e., $f(\gamma A(t)) = E_\alpha(\gamma A(t))$ for $\alpha\in [0,1]$; this will allow us once again to interpolate between the resolvent and the exponential behavior;
\begin{equation}\label{eq:higham_model_ml}
	    \begin{cases}
	    W'(t) = - b(W(t)-I) - W(t) \log( E_{\alpha}(\gamma A(t)) ), & t > 0,\\
	    W(0) = I,
	    \end{cases}
	\end{equation}
To guarantee the existence of a principal determination of the matrix logarithm function in this case, we simply need $\gamma$ satisfying the requirements in \cref{pro:selecting_b}.
In fact, the striking observation here is that, when $\alpha\in(0,1]$, the choice of $\gamma$ no longer depends on the topology of the temporal network, thus overcoming the issue highlighted in \cref{rem:temporal_gamma,rem:temporal_gamma2}. 

\smallskip
We are now in a position to define centrality measures for nodes in temporal networks. We will define two measures of centrality, one to account for the broadcasting capability of a node, i.e., its ability to spread information to other nodes as time progresses, and one to account for the receiving capability of a node, i.e., its ability to gather information from other nodes and previous time stamps. We notice that, even when the time-evolving network displays only undirected edges, the presence of time induces a sense of  directionality: if information goes from node $i$ to $j$ at time $t$ and then from $j$ to $k$ at time $t+1$, then the information travelled from $i$ to $k$, but not from $k$ to $i$.   
Following~\cite{MR3177230} we thus define the following measures of centrality for temporal networks. 

\begin{definition}
Let $A(t)$ be the adjacency matrix of a time-evolving network $G = (V,E(t))$. Suppose that $\alpha\in (0,1]$ and that $\gamma$ satisfies the conditions of \cref{pro:selecting_b}. Moreover, let $W(t)$ be the solution to \cref{eq:higham_model_ml}. Then, for every node $i\in V$ we define its 
\begin{itemize}
\item \emph{dynamic broadcast centrality} as the $i$th entry of the vector: $\mathbf{b}(t)=W(t)\boldsymbol{1}$; and its
\item \emph{dynamic receive centrality} as the $i$th entry of the vector $\mathbf{r}(t)=W^T(t)\bone$. 	
\end{itemize}
\end{definition}

\begin{remark}
These measures reduce to those introduced in~\cite{MR3177230} when $\alpha = 0$.
\end{remark}

\subsection{Numerical experiments -- continuous time network}

We consider the two synthetic experiments from~\cite{MR3177230} for which we have a way of interpreting the results. The first one models a cascade of information through the directed binary tree structure illustrated in \cref{fig:higham_example_graph}. 
\begin{figure}[t]
    \centering
    \includegraphics[width=0.8\columnwidth]{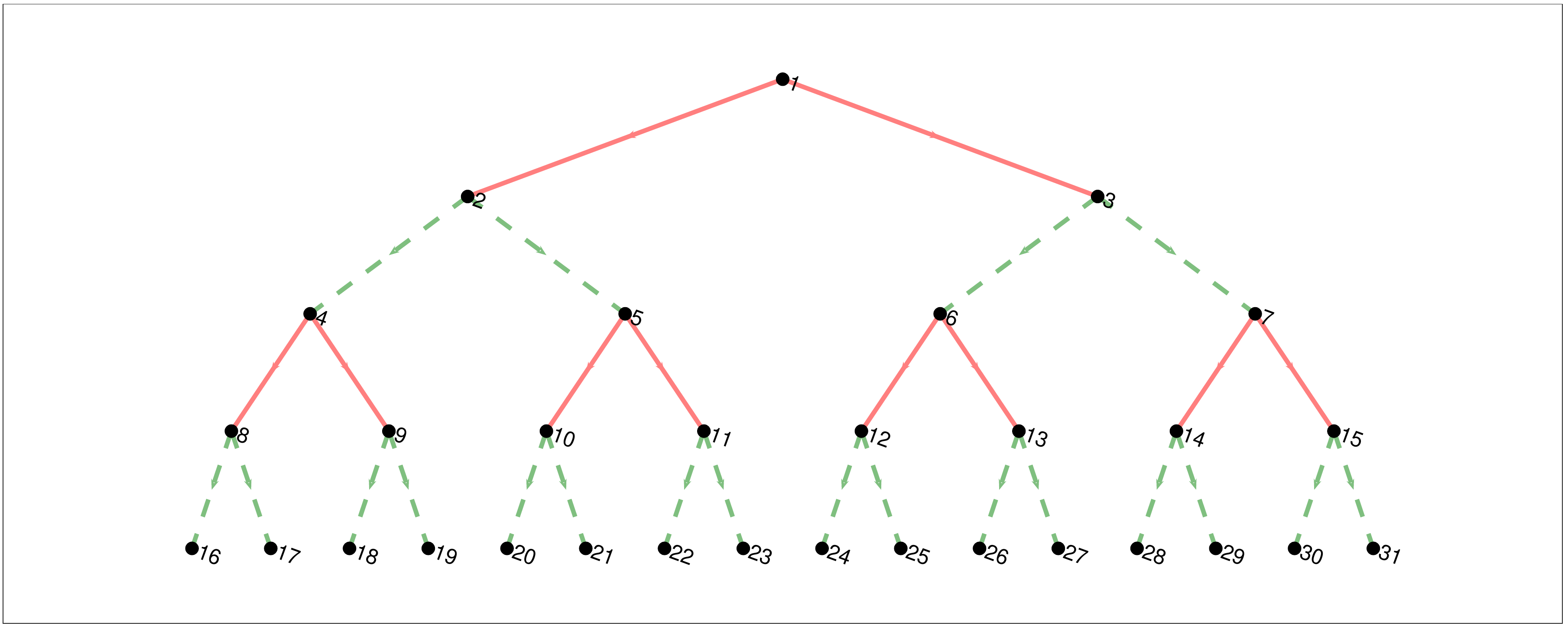}
    \caption{The tree alternates between the solid and dashed edges, i.e., it alternates between two adjacency matrices $A_1$ and $A_2$ made, respectively, by the continuous and dashed edges. In each time step extra noise is added in the form of 5 random directed edges connecting any two nodes of the graph.}
    \label{fig:higham_example_graph}
\end{figure}
On a time interval $T = [0, 20]$, the adjacency matrix $A(t)$ of such network switches between two constant values $A_1$ and $A_2$ on each sub-interval $[i, i + 1)$, specifically
\begin{equation*}
    A(t) = \begin{cases}
    A_1, & \operatorname{mod}(\lfloor t \rfloor,2) = 0 \\
    A_2, & \text{ otherwise},
    \end{cases}
\end{equation*}
where $A_1$ is the adjacency matrix relative to the subgraph with solid edges in \cref{fig:higham_example_graph}, and $A_2$ the one relative to the subgraph with dashed edges. Noise is added to the structure in the form of five extra directed edges chosen uniformly at random at each time interval. The maximum of the spectral radii of all the matrices involved in the computation is $1$, and therefore the solution to \cref{eq:higham_model} is well defined for all $\gamma<1$; see \cref{rem:temporal_gamma2}. As for the time-invariant case, we consider the Kendall $\tau$ correlation between the \emph{broadcast} and \emph{receive} centrality measures obtained by solving~\eqref{eq:higham_model} and the one obtained by solving~\eqref{eq:higham_model_ml}. To compare them we fix for both the same value of $b=0.01$, i.e., a case in which we allow older walks to make a substantial contribution, and compare the measure for the corresponding value of $\gamma$ in \cref{fig:cascade_example}.
\begin{figure}[tbhp]
\centering
\subfloat[Dynamic Receive]{\label{fig:cascade_example_receive}\includegraphics[width=0.45\textwidth]{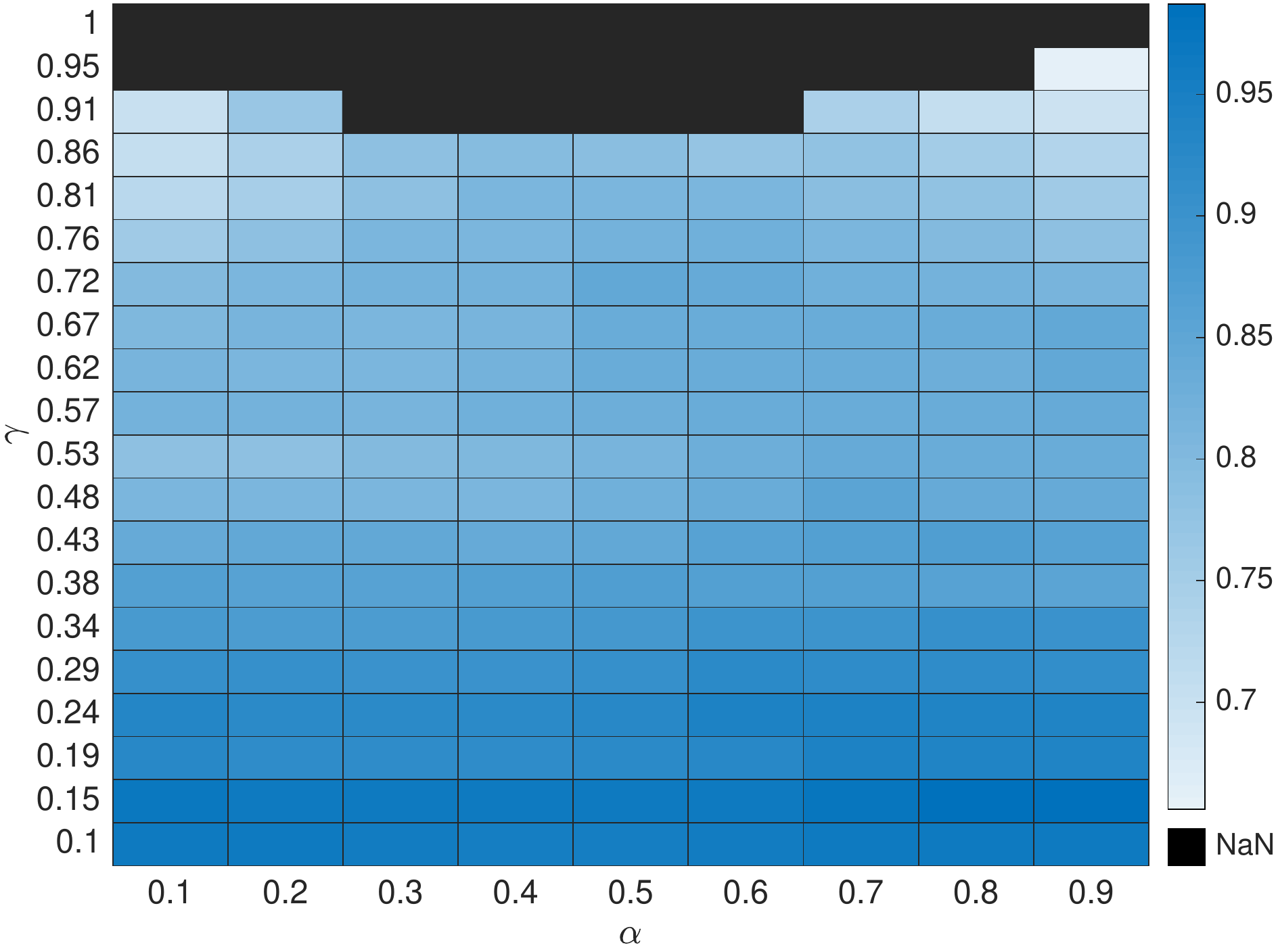}}
\subfloat[Dynamic Broadcast]{\label{fig:cascade_example_broadcast}\includegraphics[width=0.45\textwidth]{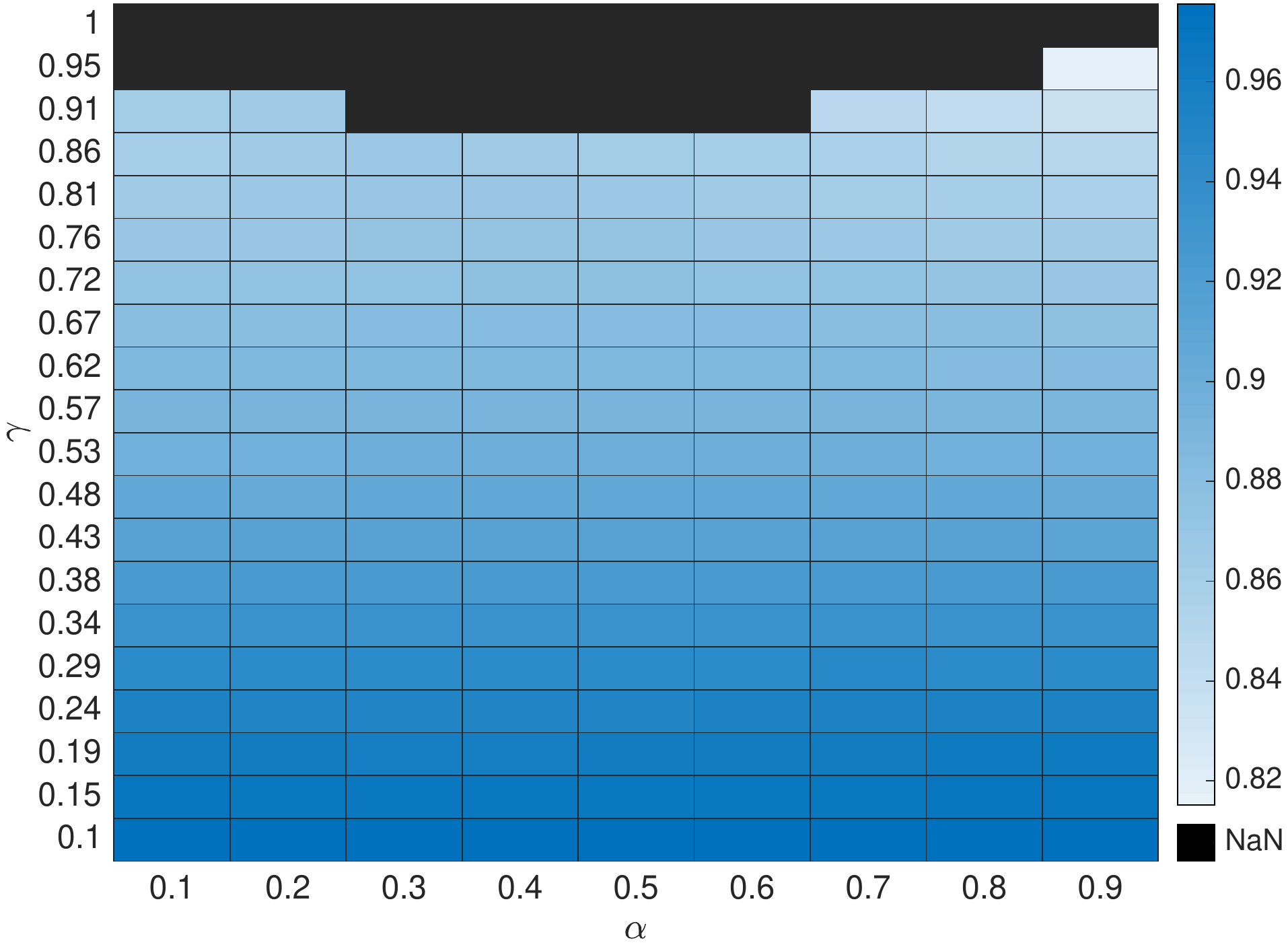}}
\caption{We report here the Kendall-$\tau$ correlation for the receive and broadcast rankings obtained with~\eqref{eq:higham_model} and \eqref{eq:higham_model_ml} with respect to the same $\gamma$ and varying the values of $\alpha$ for the latter.}
\label{fig:cascade_example}
\end{figure}

What we observe for both these measures is that they are more sensitive to the variation of the scaling $\gamma$ than to the variation of $\alpha\to 1$. 

\smallskip

We now consider the second synthetic experiment from~\cite{MR3177230}. This case mimics multiple rounds of voice calls along an undirected tree structure in which every node has at most one edge at any given time, i.e., there are no ``conference'' calls. 
\begin{figure}
    \centering
    \includegraphics[width=\columnwidth]{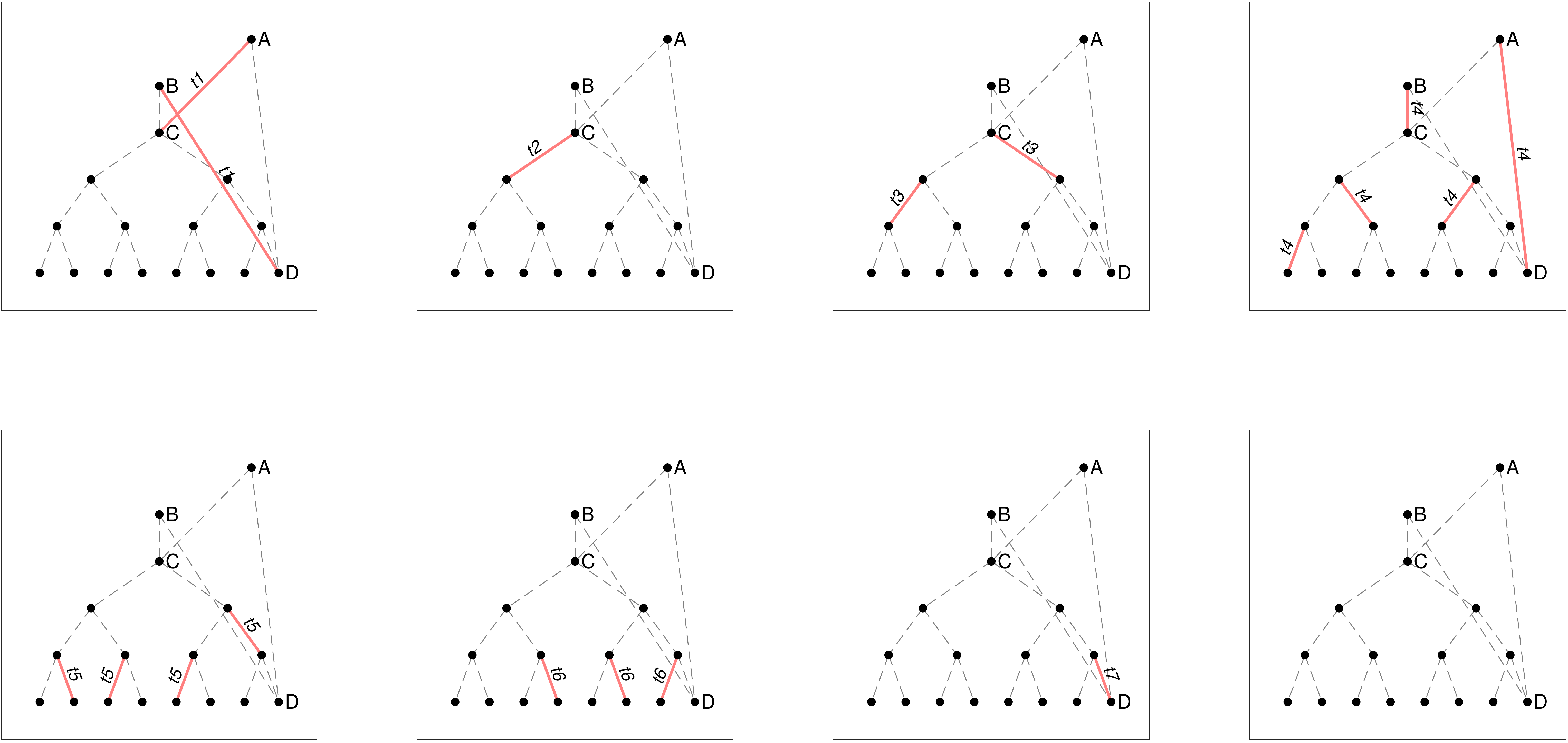}
    \caption{Snapshots of the cycle of activation of the edges in the synthetic phone graph network on the time intervals $t_i = [(i-1)\tau,(i-1+0.9)\tau)$ for $\tau=0.1$, and $i=1,\ldots,8$. The figure reports the arithmetic average over all the time steps for each combination of the parameters (every couple of simulation has been performed to march on the same time-grid).}
    \label{fig:snapshot_telephone}
\end{figure}
\begin{figure}[ht]
    \centering
    \includegraphics[width=0.90\columnwidth]{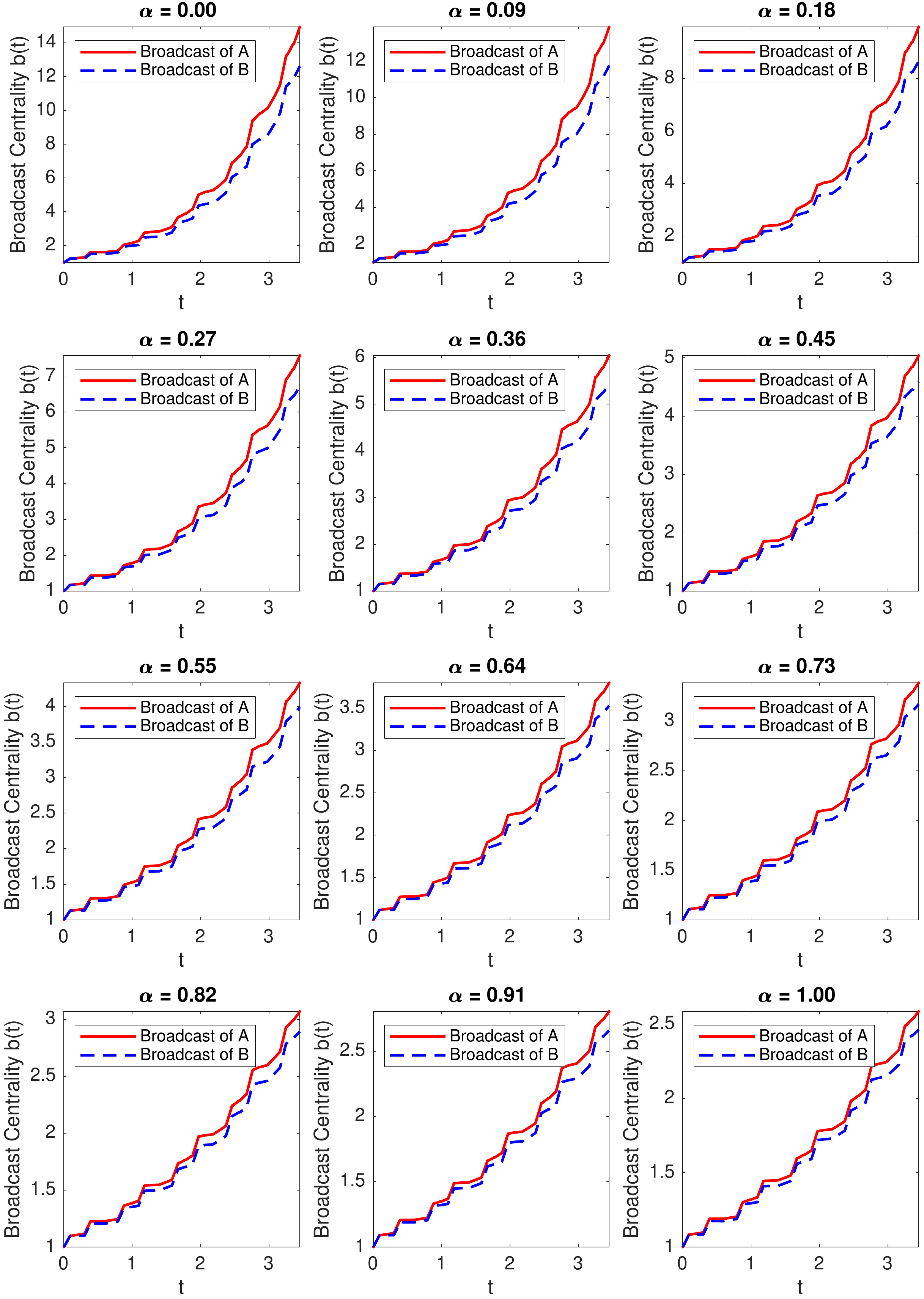}
    \caption{Telephone cascade communication example with model~\eqref{eq:higham_model_ml} for $\gamma=0.9$ and $b=0.1$.}
    \label{fig:telefoncascade}
\end{figure}
In \cref{fig:snapshot_telephone} we have reported the snapshots of the adjacency matrix for the network; all these matrices have unitary spectral radius. 
Connections are built in such a way that node A talks to node C in the first time interval thus initiating the cascade of phone calls in the network. On the other hand, node B waits until the fourth time interval to contact node C, and this does not cause any new cascade of calls.

Even if nodes A and B have an identical behaviour,
both contacting nodes C and D for the same length of time, the results in \cref{fig:telefoncascade} (for $b=0.1$ and $\gamma = 0.9$) show that the dynamic broadcast centrality measure is able to capture the knock-on effect enjoyed by node A irrespective of the value of $\alpha$ used in~\eqref{eq:higham_model_ml}. As we smoothly transition from the resolvent towards the exponential, the same behavior is observed, although with different scales for the centrality scores. 
This confirms that other ML functions allow to replicate the results obtained by resolvent-based temporal measures, while at the same time overcoming the issue of having to select the downweighting parameter $\gamma$; cf.~\cref{rem:temporal_gamma2}.

\section{Conclusions}\label{sec:conclusions} 

We discussed previous appearances of the Mittag--Leffler function $E_{\alpha}(\gamma z)$ in network science and described a general theory for ML-based centrality measures. This new family of functions is parametric, and suitable choices of the parameters were discussed. 
The asymptotics of the centrality measures  were discussed theoretically and numerically, showing that by varying $(\alpha,\gamma) \in [0,1]\times (0,\infty)$ our centrality indices move between degree, eigenvector, resolvent, and exponential centrality indices. 
We described new ML-based centrality measures for time-evolving networks by extending previous results based on the matrix resolvent. We introduced two parametric centrality measures for which the parameter no longer depends on the underlying dynamic graph, thus allowing for greater flexibility in the implementation of these techniques. 

Numerical experiments on both real-world and synthetic networks were presented.

Future work will focus on exploiting the connection linking Mittag--Leffler functions and the evolution of dynamical systems with respect to a time-fractional derivative. In particular, we plan to analyze the behavior of networked dynamical systems evolving in fractional-time by means of ML functions.

\section*{Acknowledgments}
The authors thank the anonymous referees for their valuable suggestions. 

\bibliographystyle{siamplain}
\bibliography{fractionalnetwork}

\includepdf[pages=-]{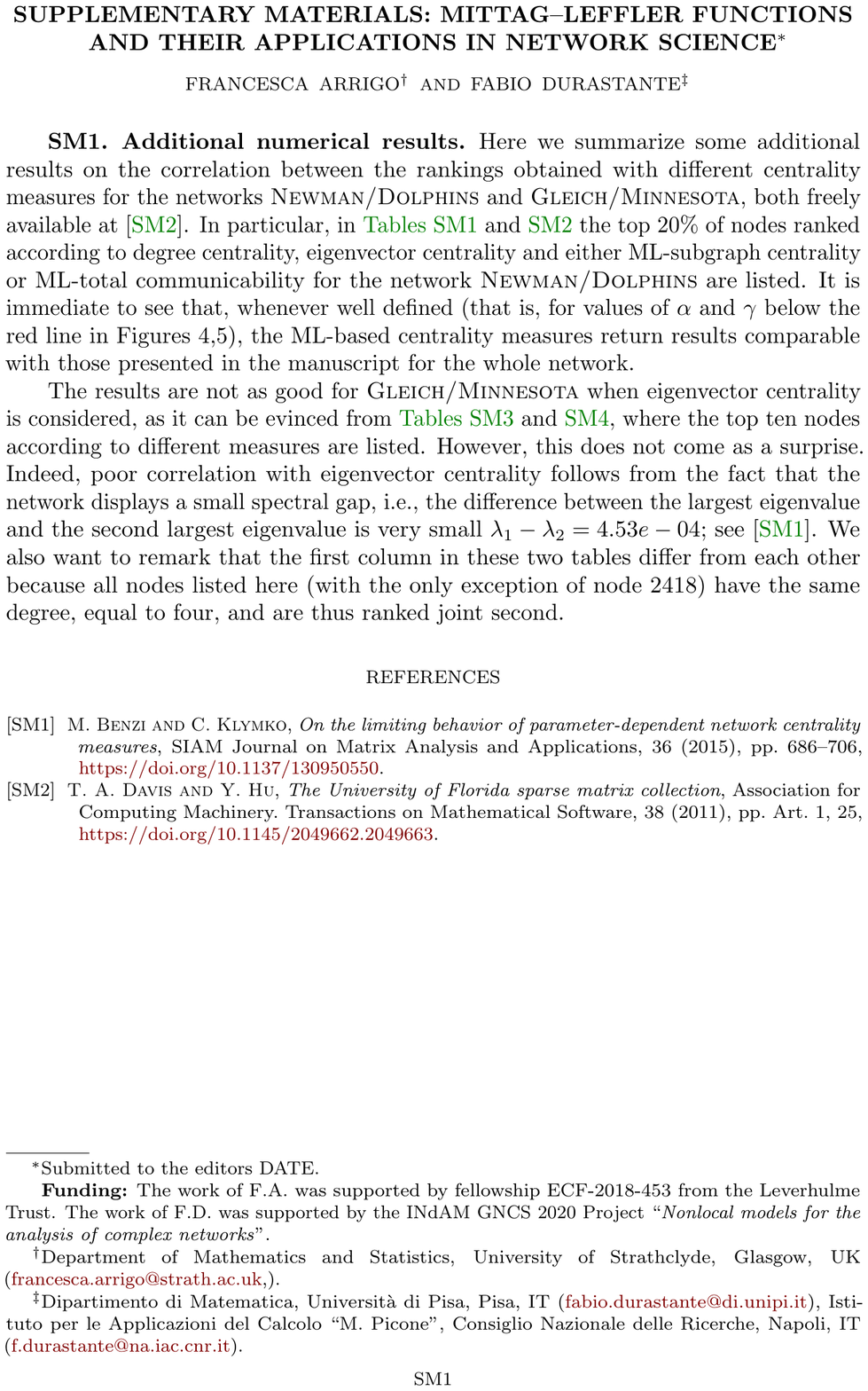}

\end{document}